\documentclass[11pt,a4paper,oneside,reqno]{amsart}
\usepackage{mathrsfs}
\usepackage{color}
\usepackage{amsmath,amsfonts,amssymb,amsthm,graphicx,color}
\rightmargin 1cm
\leftmargin -1cm
\oddsidemargin 0,1cm
\evensidemargin 0cm
\textwidth 16cm
\topmargin -1cm
\textheight 23cm

\def\({\left(}
\def\){\right)}

\newcommand{\vv}{\mathbf{v}}
\newcommand{\dd}{\dd }

\newcommand{\ee}{\mathbf{e}_3}
\newcommand{\hh}{\mathbf{H}}
\newcommand{\ff}{\mathbf{f}}
\renewcommand{\gg}{\mathbf{g}}
\newcommand{\zz}{\mathbf{z}}

\newcommand{\HH}{\mathbf{H}}
\def\l|{\left|}
\def\r|{\right|}
\newcommand{\R}{\mathbb{R}}
\renewcommand{\O}{{\Omega}}

\renewcommand{\S}{\mathbb{S}}
\renewcommand{\l}{\lambda}

\renewcommand{\o}{\omega}

\renewcommand{\a}{\alpha}
\renewcommand{\leq}{\leqslant}
\renewcommand{\geq}{\geqslant}
\newcommand{\p}{\partial}

\def\HH{\mathbf{H}}
\def\dd{\mathbf{d}}

\def\qq{\mathbf{e}}
\def\({\left (}
\def\){\right )}
\def\[{\left[}
\def\]{\right]}
\def\eps{\epsilon}

\newtheorem{theorem}{{\bf Theorem}}[section]

\newtheorem{lemma}[theorem]{{\bf Lemma}}
\newtheorem{prop}[theorem]{{\bf Proposition}}
\newtheorem{remark}[theorem]{{\bf Remark}}

\begin{document}

\title{\small Control of Harmonic Map Heat Flow with an External Field}

\date{}
\author{Yuning Liu }
\address{NYU Shanghai, 1555 Century Avenue, Shanghai 200122, China,
and NYU-ECNU Institute of Mathematical Sciences at NYU Shanghai, 3663 Zhongshan Road North, Shanghai, 200062, China}
\email{yl67@nyu.edu}

\begin{abstract}
We investigate the control problem of harmonic map heat flow  by means of an external magnetic field.
In contrast to the situation of a  parabolic  system with internal or boundary control, the   magnetic field acts as the coefficients of  lower order terms  of the equation.
We show that for initial data whose image stays in a hemisphere, with one control acting on a subset of the domain plus one that only depends on time, the state of the system can be steered to any ground state, i.e. any given unit vector, within any short time. To achieve this, in the first step  a   control is applied  to steer the solution into a small neighborhood of the peak of the hemisphere. Then under  stereographic projection, the original system is reduced  to an internal parabolic  control system with initial data sufficiently close to $0$ so  that the existing method for  local controllability can be applied. The key process  is to give an explicit solution of an underdetermined algebraic system such that the affine type control can be converted into an internal control.
\end{abstract}
\maketitle

\section{Introduction}
\setcounter{equation}{0}

We investigate the controllability of the following system with Neumann boundary condition
  \begin{equation}\label{gradientflow}
\left\{
\begin{array}{rll}
 \p_t\dd-\Delta \dd &=|\nabla \dd |^2\dd + (\mathbf{H}\cdot \dd )\mathbf{H}- (\mathbf{H}\cdot \dd )^2\dd,&~\text{in}~Q=\O\times (0,T),\\
\tfrac{\p \dd}{\p \nu}&=0,&~\text{on}~\Sigma=\p\O\times (0,T),
\end{array}
\right.
  \end{equation}
  where  $\O\subset\R^3$ is an open set with $C^2$ convex boundary $\p\O$, and $\nu$ denotes  the inner unit normal vector to   $\p\O$.
The work  is motivated by the analysis and optimal control of a simplified Ericksen--Leslie system describing the dynamics of a liquid crystal (see \cite{Lin2010}, \cite{MR3518239}  and \cite{MR3621817}) when the hydrodynamic effects are neglected. We recall that  the   mathematical description of the static configuration of liquid crystal material under a magnetic field is to consider the  Oseen--Frank model \cite{HardtKinderlehrerLin1986}. In the simplest case, the energy functional of such model has the form (see \cite{lin2007magnetic})
 \begin{equation}\label{ofma}
   \mathcal{E}(\dd )=\frac 12\int_\O\(|\nabla \dd |^2-
    (\HH\cdot \dd )^2\)dx,
 \end{equation}
 where $\dd :\O\to \S^2$ describes the local orientation  of the liquid crystal molecules, and $\mathbf{H}:\O\to \R^3$ denotes the external magnetic  field. Here we omit the diamagnetic susceptibility constant in front of the term
   $(\HH\cdot \dd )^2$. The orientation $\dd$ tends to align along the magnetic field $\HH$ for the sake of minimizing the total energy \eqref{ofma}.
By introducing a Lagrange multiplier  to penalize  the constraint $|\dd|=1$, we can derive the Euler--Lagrange equation of \eqref{ofma}
 \begin{equation}\label{stationary}
   -\Delta \dd =|\nabla \dd |^2\dd + (\mathbf{H}\cdot \dd )\mathbf{H}-(\mathbf{H}\cdot \dd )^2 \dd,
 \end{equation}
 and thus \eqref{gradientflow} is the corresponding gradient flow.

 To clarify the constraint $|\dd(x,t)|=1$ in \eqref{gradientflow}, we note   that in case the external field $\HH$ is given and is regular enough, say $C^1$ up to the boundary, then the short time  classical solution to \eqref{gradientflow} can be constructed using standard parabolic theory, provided    the  initial data $\dd(x,0):\O\mapsto \S^2$
 is  regular enough. So the scalar function
 $g(x,t)\triangleq|\dd(x,t)|^2-1$ satisfies  the   Neumann boundary condition $\tfrac{\p g}{\p \nu}=0$ and the following linear parabolic equation
 \begin{equation}\label{unique}
\p_t g-\Delta g=2(|\nabla\dd|^2-(\HH\cdot\dd)^2) g.
 \end{equation}
 So  the constraint $|\dd|^2=1$ shall be preserved as long as the classical solution exists due to the uniqueness of solution of \eqref{unique}. However, in the control problem the vector field $\HH$ is part of the unknown. In the sequel, such a constraint will be achieved through an alternative way  using the stereographic  projection.

Another feature of \eqref{gradientflow} is that it is rotationally invariant. More precisely, if $(\dd,\HH)$ satisfies \eqref{gradientflow}, so does $(\mathcal{R}\dd,\mathcal{R}\HH)$, for any orthogonal matrix $\mathcal{R}$.  This property will be used in the proof of the main theorem.

 In the last three decades there has been an enormous amount of progresses concerning  harmonic heat flow, see the comprehensive monograph \cite{LinWang2008}. A closely related work  is  due to  Chen  \cite{chen1999maximum}, who  considers the system with another form of external field compared with \eqref{gradientflow}, and discusses  the existence of a classical solution and its large time behavior  when the initial data lies in a hemisphere. However, to the best of our knowledge, there has been no result concerning the controllability of such system, neither by boundary control nor by magnetic field. On the other hand, there have been numerous advances in the controllability of nonlinear parabolic equation or system. The readers can refer to, for instance, \cite{doubova2002controllability,fernandez2006exact,MR2257228}.

The main result of this work is stated as follows.
\begin{theorem}\label{mainthm}
Let $\omega$  be an open proper subset of $\Omega$, and $\alpha\in (0,1),T>0$ be  fixed numbers.
For   any initial state  $\dd_0\in C^{2+\a}(\overline{\O},\S^2)$   satisfying $\p_\nu \dd_0=0$ on $\p\Omega$ and  \begin{equation}\label{hemi}
 \inf_{x\in\O}\dd_0(x)\cdot\qq> 0~\text{for some}~ \qq\in\S^2,
 \end{equation}
 and  any   constant state   $\mathbf{p}\in\S^2$,
 there exist $\hh_0(x,t)\in L^\infty(Q)$  and   $\mathbf{g}(t)\in L^\infty(0,T)$ such that the system \eqref{gradientflow} with initial data $\dd\mid_{t=0}=\dd_0$ and control
\begin{equation}\label{eq:1.6}
\HH(x,t)=\mathbf{g}(t) +\chi_\omega\hh_0(x,t)
\end{equation}
 satisfies $\dd\mid_{t=T}=\mathbf{p}$. In \eqref{eq:1.6} $\chi_\omega$ denotes the characteristic function of $\omega$.
 \end{theorem}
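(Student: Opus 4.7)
The plan is to exploit the decomposition $\HH=\gg(t)+\chi_\omega\hh_0(x,t)$ of the control in \eqref{eq:1.6} and split the construction into two stages. In the first stage the spatially homogeneous field $\gg(t)$, combined with the hemisphere assumption \eqref{hemi}, drives $\dd$ into an arbitrarily small $C^{2+\alpha}$-neighborhood of the target $\mathbf{p}$. In the second stage, after a stereographic projection centred at the antipode of $\mathbf{p}$, the residual is eliminated by the localized control $\hh_0$ via a local null-controllability argument on $\omega$. The rotational invariance of \eqref{gradientflow} noted in the introduction will be used freely to simplify the geometry of the intermediate computations.

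\textbf{Stage 1 (global alignment with the target).} On an initial subinterval $[0,\tau]$ I would take $\hh_0\equiv 0$ and $\gg(t)=\lambda\ff(t)$, where $\lambda>0$ is a large parameter and $\ff(t)\in\S^2$ is a smooth path joining the hemisphere vector $\ff(0)=\qq$ of \eqref{hemi} to $\ff(\tau)=\mathbf{p}$. For the initial phase in which $\ff(t)\equiv\qq$ is constant, a direct computation from \eqref{gradientflow} combined with $|\dd|=1$ shows that the scalar $u:=\dd\cdot\qq$ satisfies the Neumann scalar problem
\begin{equation*}
\partial_t u-\Delta u=|\nabla\dd|^2u+\lambda^2 u(1-u^2),\qquad u(\cdot,0)\geqslant c_0>0,
\end{equation*}
with $0<u\leqslant 1$ preserved by the parabolic maximum principle. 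The reaction term $\lambda^2 u(1-u^2)$ drives $u$ toward $1$ at a rate proportional to $\lambda^2$, so, arguing in the spirit of \cite{chen1999maximum}, one can place $\dd$ in any preassigned $C^{2+\alpha}$-neighborhood of $\qq$ within arbitrarily short time by taking $\lambda$ large. A quasi-static continuation along the path $\ff(t)$ then transports the near-constant state to a $C^{2+\alpha}$-neighborhood of $\mathbf{p}$ by time $\tau$.

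\textbf{Stage 2 (stereographic reduction and local null controllability).} On $[\tau,T]$ I would compose $\dd$ with the stereographic projection $\Pi\colon\S^2\setminus\{-\mathbf{p}\}\to\R^2$ based at the antipode of $\mathbf{p}$, so that $\mathbf{p}$ corresponds to $0\in\R^2$. Setting $v:=\Pi(\dd)$, the system \eqref{gradientflow} transforms into a semilinear Neumann parabolic system of the schematic form
\begin{equation*}
\partial_t v-\Delta v=\mathcal{N}(v,\nabla v)+\Phi(v,\HH),
\end{equation*}
where $\mathcal{N}$ is smooth, quadratic in $\nabla v$, and vanishes at $v=0$, while $\Phi(v,\cdot)$ is a polynomial of degree at most two in $\HH\in\R^3$. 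The crucial step is to convert this quadratic-in-$\HH$ dependence into a bona-fide additive internal control: given any prescribed forcing $f\in L^\infty(\omega\times(\tau,T);\R^2)$, I would solve the underdetermined algebraic system $\Phi(v(x,t),\HH)=f(x,t)$ on $\omega$ explicitly for $\HH=\hh_0(x,t)$, in a measurable and uniformly bounded manner. For $v$ close to $0$ the map $\HH\mapsto\Phi(v,\HH)$ from $\R^3$ to $\R^2$ is surjective and admits an explicit bounded right inverse, so such a selection exists. With this reduction in hand, the equation for $v$ is a standard semilinear parabolic system with $L^\infty$ internal control $f$ and small initial data, to which the Carleman-estimate plus fixed-point machinery for local null controllability (see \cite{doubova2002controllability,fernandez2006exact}) applies and yields an $f$ steering $v(\cdot,T)=0$, i.e.\ $\dd(\cdot,T)=\mathbf{p}$.

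The principal obstacle is the explicit algebraic inversion underlying Stage 2. One must construct a selection of $\hh_0$ from the underdetermined quadratic system $\Phi(v,\HH)=f$ that is not merely pointwise defined, but also measurable in $(x,t)$, uniformly bounded by a constant multiple of $\|f\|_{L^\infty}$, and compatible with the quantitative smallness required to close the Carleman/fixed-point argument. As indicated in the abstract, exhibiting such an explicit selection is precisely what makes it possible to treat the ``affine type'' magnetic-field control, which enters \eqref{gradientflow} only through coefficients of lower-order terms, as a genuine internal control.
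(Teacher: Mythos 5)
Your Stage 2 is essentially the paper's route (stereographic projection, explicit solution of the underdetermined algebraic system so that the quadratic dependence on $\HH$ becomes a genuine internal control, then Carleman-based linear null controllability plus a fixed-point argument); the paper's Lemma \ref{implicitfunc} even gives the bounded, analytic right inverse globally in $\vv$ (not only near $\vv=0$) by prescribing $\HH\cdot\dd=\chi_\omega$, which linearizes the system. The genuine gap is in Stage 1. First, taking $\lambda$ large only yields \emph{pointwise} alignment: the comparison argument for $u=\dd\cdot\qq$ gives $1-u\leq e^{-c\lambda^2 t}$, i.e.\ $L^\infty$-closeness of $\dd$ to $\qq$, but it does not give closeness in $C^{2+\alpha}$, nor even in $W^{1,\infty}$, which is exactly what the local null-controllability result needs for the projected data $\vv_0$. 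During the strong-field phase the right-hand side contains $\lambda^2$ terms, so higher norms of $\dd$ are \emph{not} uniformly small (nor even obviously uniformly bounded in $\lambda$ without extra work). The paper closes this by three ingredients you do not supply: a Bernstein-type gradient bound and a time-derivative bound that are \emph{independent of} $\lambda$ (Lemma \ref{gradientest}, Proposition \ref{eq:timeest}), and a control-free window $[4T_0,5T_0]$ in which the smoothing estimate of Proposition \ref{smoothing effect} upgrades $L^\infty$-smallness to $W^{1,\infty}$-smallness. Without some such mechanism your claim of reaching a ``preassigned $C^{2+\alpha}$-neighborhood by taking $\lambda$ large'' is unjustified.

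Second, your ``quasi-static continuation along the path $\ff(t)$'' from $\qq$ to $\mathbf{p}$ is asserted, not proved, and it is not a minor omission: with a rotating field the scalar $\mu(t)=\dd\cdot\ff(t)$ picks up the term $\dd\cdot\dot\ff(t)$, which destroys the sign structure used in the maximum-principle and Bernstein arguments (the auxiliary function $f(\dd)=\dd\cdot\qq-\delta_0$ is no longer a supersolution of the relevant inequality), so the uniform-in-$\lambda$ estimates would have to be re-derived with quantitative control of $\dot\ff$ versus $\lambda$ — a nontrivial adiabatic-tracking argument, and problematic when $\qq\cdot\mathbf{p}\leq 0$. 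The paper avoids this entirely: it steers the state \emph{exactly} to $\qq$ in a short time (large fixed field plus the local controllability step), then uses the rotational invariance of \eqref{gradientflow} and two intermediate states $\mathbf{p}_1,\mathbf{p}_2$ trisecting the angle between $\qq$ and $\mathbf{p}$, so that each leg starts from a constant state lying strictly in the hemisphere of the next target and the fixed-direction analysis applies verbatim. You should either replace the quasi-static path by this iteration-with-intermediate-targets scheme, or supply the missing uniform estimates for a time-dependent field direction.
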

\begin{remark}
To steer the system \eqref{gradientflow} to any ground state $\mathbf{p}\in\S^2$, we first drive it to $\qq$. This can be done by   first choosing $\HH=\lambda(t)\qq$ with $\lambda$ being sufficiently large such that it forces the solution $\dd$ to stay in a small neighborhood of $\qq$ within $[0,\tfrac T8]$.   Then we construct $\HH=\chi_\omega \HH_0$  by proving a local controllability result within $[\tfrac T8,\tfrac{T}4]$ such that $\dd(\cdot,\tfrac{T}4)\equiv \qq$.   Using the rotational invariance of \eqref{gradientflow}, we can  repeat the previous process to steer the system successively to two intermediate states $\mathbf{p}_1,\mathbf{p}_2\in\S^2$ which trisect the angle between $\qq$ and $\mathbf{p}$.  Finally we drive the state from $\mathbf{p}_2$ to  $\mathbf{p}$.
\end{remark}

The rest of the paper is  organized as follows. In Section 2, we recast \eqref{gradientflow} into a semi-linear parabolic system with internal control.  In Section 3, we prove the existence and uniqueness of the global in time classical solution to \eqref{gradientflow} with a special choice of the magnetic field, i.e.  $\HH=\lambda\qq$. This result is based on a  Bernstein type estimate and the novelty is that the Lipschitz norm of the solution is independent of the size of $\lambda(t)$. Based on the results in these two sections, we  give the proof of Theorem \ref{mainthm}    in the last section.

Regarding notation, we shall use bold letters to  denote vectors or matrices, and use  the non-bold  letters with indices to denote their components. For instance, $\dd=(d_1,d_2,d_3)=(d_i)_{1\leq i\leq 3}$. We shall adopt  the convention in differential geometry that the partial derivatives $\p_{x_i}$  of various tensors are abbreviated by adding $_{,i}$  to the corresponding components: $\p_{x_i}d_j=d_{j,i}$. Moreover, repeated indices will be summed. The standard basis vectors in $\mathbb{R}^3$ are denoted by $\mathbf{e}_i$ with $1\leq i\leq 3$. We shall use $\mathbf{a}\cdot\mathbf{b}=a_ib_i$  for the inner product   and colon for the contraction of two matrices $\mathbf{A}:\mathbf{B}=A_{ij}B_{ij}$.

 \section{Reduction  to parabolic  system with internal control}
\setcounter{equation}{0}
In this section we shall use the stereographic projection to remove the constraint $|\dd |=1$ in \eqref{gradientflow} and  reduce it    to a  parabolic   system with internal control whose support  lies in an open  subset $\o\subsetneqq\O$.
The  stereographic projection   $\mathbf{\Psi}=(\Psi_1,\Psi_2,\Psi_3):\R^2\to \S^2\backslash\{-\ee\}$  is defined via
\begin{equation}\label{stereograph}
 \mathbf{\Psi}(v_1,v_2)\triangleq\(\frac{2v_1}{1+v_1^2+v_2^2},\frac{2v_2}{1+v_1^2+v_2^2}
  ,\frac{1-v_1^2-v_2^2}{1+v_1^2+v_2^2}\).
\end{equation}

\begin{prop}\label{equiv3}
Let $\dd$ be a  classical solution to  \eqref{gradientflow} satisfying
\begin{equation}
\inf_{\O\times (0,T)}|\dd(x,t)+\ee |>0,
\end{equation}
then $\vv=\mathbf{\Psi}^{-1}(\dd)$ is a classical solution to the following equation
  \begin{equation}\label{hfstereo1}
  \left\{
\begin{array}{rll}
\p_t \mathbf{v}&=\Delta \mathbf{v} -2\nabla \mathbf{v}\cdot\nabla \log h +\frac{2|\nabla \mathbf{v}|^2}{h} \mathbf{v}
 +\frac{h^2}{4}(\mathbf{H}\cdot \dd )H_i \nabla_\vv \Psi_i(\vv),&~\text{in}~Q,\\
\tfrac{\p \vv}{\p\nu}&=0,&~\text{on}~\Sigma,
\end{array}
\right.
\end{equation}
where $h=1+|\vv|^2$.
 Conversely, if $\vv$ is a strong solution to \eqref{hfstereo1}, then $\dd=\mathbf{\Psi}(\vv)$ is a strong solution to \eqref{gradientflow}.
\end{prop}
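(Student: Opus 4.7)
The plan is to exploit that $\mathbf{\Psi}:\R^2\to\S^2\setminus\{-\ee\}$ is a smooth diffeomorphism whose pullback metric on $\R^2$ is conformally flat with $g_{\alpha\beta}=\frac{4}{h^2}\delta_{\alpha\beta}$; equivalently, one has the conformal identity $\p_{v_\alpha}\Psi_i\,\p_{v_\beta}\Psi_i=\frac{4}{h^2}\delta_{\alpha\beta}$, while differentiating $|\mathbf{\Psi}|^2\equiv 1$ gives $\p_{v_\alpha}\Psi_i\,\Psi_i=0$. Under the standing hypothesis $\inf|\dd+\ee|>0$, the inverse $\mathbf{\Psi}^{-1}$ is smooth on a neighborhood of the image of $\dd$, so $\vv=\mathbf{\Psi}^{-1}(\dd)$ is as regular as $\dd$, and the task reduces to deriving the pulled-back PDE.

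Substituting $d_i=\Psi_i(\vv(x,t))$ into \eqref{gradientflow} and applying the chain rule yields
\[
\p_t d_i=\p_{v_\alpha}\Psi_i\,\p_tv_\alpha,\qquad \Delta d_i=\p_{v_\alpha}\Psi_i\,\Delta v_\alpha+\p^2_{v_\alpha v_\beta}\Psi_i\,\nabla v_\alpha\cdot\nabla v_\beta,
\]
and also $|\nabla\dd|^2=\frac{4}{h^2}|\nabla\vv|^2$ via the conformal identity. The next step is to project onto $T_\dd\S^2$ by taking the inner product with $\p_{v_\gamma}\mathbf{\Psi}$ for $\gamma=1,2$: the left-hand side collapses to $\frac{4}{h^2}(\p_tv_\gamma-\Delta v_\gamma)$; the normal contributions $|\nabla\dd|^2\Psi_i\p_{v_\gamma}\Psi_i$ and $(\HH\cdot\dd)^2\Psi_i\p_{v_\gamma}\Psi_i$ both vanish by $\Psi_i\p_{v_\gamma}\Psi_i=0$; and after multiplying through by $\frac{h^2}{4}$ one arrives at
\[
\p_tv_\gamma-\Delta v_\gamma=\frac{h^2}{4}\,\p_{v_\gamma}\Psi_i\,\p^2_{v_\alpha v_\beta}\Psi_i\,\nabla v_\alpha\cdot\nabla v_\beta+\frac{h^2}{4}\,(\HH\cdot\dd)H_i\,\p_{v_\gamma}\Psi_i.
\]

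The second-order term is precisely the Christoffel contribution for the conformally flat metric: a direct calculation gives $\Gamma^{\gamma}_{\alpha\beta}=-\frac{2}{h}\bigl(v_\alpha\delta^\gamma_\beta+v_\beta\delta^\gamma_\alpha-v_\gamma\delta_{\alpha\beta}\bigr)$, and using $\nabla h=2v_\alpha\nabla v_\alpha$ one finds $\Gamma^{\gamma}_{\alpha\beta}\nabla v_\alpha\cdot\nabla v_\beta=-2\nabla\log h\cdot\nabla v_\gamma+\frac{2|\nabla\vv|^2}{h}v_\gamma$, reproducing the second and third terms of \eqref{hfstereo1}; the external-field term matches the last term verbatim. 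The boundary condition transfers trivially since $\{\p_{v_1}\mathbf{\Psi},\p_{v_2}\mathbf{\Psi}\}$ is pointwise linearly independent, so $\p_\nu\dd=\p_{v_\alpha}\mathbf{\Psi}\,\p_\nu v_\alpha=0$ is equivalent to $\p_\nu\vv=0$. For the converse, starting from a strong $\vv$ solving \eqref{hfstereo1} and setting $\dd=\mathbf{\Psi}(\vv)$, the identity $|\dd|\equiv 1$ is automatic, and reversing the projection step (together with adding back the normal components $|\nabla\dd|^2\dd$ and $-(\HH\cdot\dd)^2\dd$, which are forced by $|\dd|=1$) recovers \eqref{gradientflow}. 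The step requiring the most care is the tangent--normal bookkeeping: one must recognize that $(\HH\cdot\dd)\HH-(\HH\cdot\dd)^2\dd=(\HH\cdot\dd)(\HH-(\HH\cdot\dd)\dd)$ is tangential to $\S^2$, so its intrinsic components raised by $g^{\gamma\delta}=\frac{h^2}{4}\delta^{\gamma\delta}$ produce exactly the coefficient $\frac{h^2}{4}(\HH\cdot\dd)H_i\p_{v_\gamma}\Psi_i$ appearing in \eqref{hfstereo1}; beyond this, the argument is a careful but routine chain-rule computation.
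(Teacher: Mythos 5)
Your proposal is correct and follows essentially the same route as the paper: contracting the equation with $\p_{v_\gamma}\mathbf{\Psi}$ (the paper's multiplication of $J_i$ by $\tfrac{\p\Psi_i}{\p v_\ell}$), using the conformal identity $\p_{v_\alpha}\Psi_i\p_{v_\beta}\Psi_i=\tfrac{4}{h^2}\delta_{\alpha\beta}$, and evaluating the quadratic gradient term via the Christoffel symbols of the pullback metric, which is exactly the paper's $B_{j\ell s}$ computed through the Koszul-type symmetrization. Your handling of the converse and of the boundary condition (normal component annihilated by $|\dd|\equiv1$, and the frame $\{\dd,\p_{v_1}\mathbf{\Psi},\p_{v_2}\mathbf{\Psi}\}$ being pointwise invertible) is the same argument the paper makes explicit via the matrix $\mathbf{E}$ and $\det\mathbf{E}\neq0$.
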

\begin{proof}  It follows from \eqref{stereograph} and $\dd=\mathbf{\Psi}(\vv)$ that
  $\nabla\dd=\(\frac{\p d_i}{\p x_k}\)_{1\leq i,k\leq 3}$ and $\p_t\dd$ can be computed by
\begin{equation}\label{eq:2.0}
\quad\frac{\p d_i}{\p x_k}=\sum_{j=1}^2\frac{\p \Psi_i}{\p v_j}\frac{\p v_j}{\p x_k},\quad
    \frac{\p d_i}{\p t}=\sum_{j=1}^2\frac{\p \Psi_i}{\p v_j}\frac{\p v_j}{\p t}.
\end{equation}
As a result,
\begin{equation}\label{eq:2.1}
  \begin{split}
    |\nabla\dd |^2&=\operatorname{tr}\(\nabla\dd \(\nabla\dd \)^T\)=\sum_{\ell,k=1}^3\sum_{j,s=1}
    ^2\frac{\p v_j}{\p x_k}\(\frac{\p \Psi_\ell}{\p v_j}\frac{\p \Psi_\ell}{\p v_s}\)\frac{\p v_s}{\p x_k},\\
  \Delta\dd &=\frac{\p}{\p x_k}\frac{\p d_i}{\p x_k}=\frac{\p}{\p x_k}\(\sum_{j=1}^2\frac{\p \Psi_i}{\p v_j}\frac{\p v_j}{\p x_k} \).
  \end{split}
\end{equation}
Denote
\begin{equation}\label{aijdeter}
 A_{jk}(\vv)\triangleq \frac{\p \Psi_i}{\p v_j} \frac{\p \Psi_i}{\p v_k},~\text{for}~ 1\leq j,k\leq 2,
\end{equation}

\begin{equation}\label{largeJ}
 \mathbf{J}\triangleq-\p_t \dd+\Delta \dd +|\nabla \dd |^2\dd +(\mathbf{H}\cdot \dd )\mathbf{H}-(\mathbf{H}\cdot \dd )^2 \dd,
\end{equation}
and
\begin{equation} \label{largeM}
 \mathbf{M}\triangleq-\p_t \vv+\Delta \mathbf{v} -2\nabla \mathbf{v}\cdot\nabla \log h +\frac{2|\nabla \mathbf{v}|^2}{h} \mathbf{v}
 +\frac{h^2}{4}(\mathbf{H}\cdot \dd )H_i\cdot\nabla_\vv \Psi_i(\vv),
\end{equation}
with $h=1+|\mathbf{v}|^2$, $\mathbf{J}=\{J_i\}_{1\leq i\leq 3}$ and $\mathbf{M}=\{M_i\}_{1\leq i\leq 2}$. Then we need to show the     following equivalence:
\begin{equation}\label{equiv4}
  \mathbf{M}=0\Longleftrightarrow \mathbf{J}=0.
\end{equation}
To do this, we first use \eqref{eq:2.0} and \eqref{eq:2.1} to write $\mathbf{J}$ component-wise
\begin{equation*}
  J_i=-\frac{\p \Psi_i}{\p v_j}\frac{\p v_j}{\p t}+\frac{\p}{\p x_k}\(\frac{\p \Psi_i}{\p v_j}\frac{\p v_j}{\p x_k} \)+d_i\frac{\p v_j}{\p x_k}\(\frac{\p \Psi_\ell}{\p v_j}\frac{\p \Psi_\ell}{\p v_s}\)\frac{\p v_s}{\p x_k}+(\mathbf{H}\cdot \dd )H_i-(\mathbf{H}\cdot \dd )^2 d_i.
\end{equation*}
Multiplying the above equality by $\frac{\p \Psi_i}{\p v_{\ell}}$, summing over $i$ and using $|\dd|=1$, we obtain
\begin{equation}\label{leftmul}
\begin{split}
   J_i\frac{\p \Psi_i}{\p v_{\ell}} =&-\frac{\p \Psi_i}{\p v_{\ell}} \frac{\p \Psi_i}{\p v_j}\frac{\p v_j}{\p t}+\frac{\p}{\p x_k}\(\frac{\p \Psi_i}{\p v_{\ell}}\frac{\p \Psi_i}{\p v_j}\frac{\p v_j}{\p x_k} \)\\
   &-\frac{\p}{\p x_k}\(\frac{\p \Psi_i}{\p v_{\ell}}\)\(\frac{\p \Psi_i}{\p v_j}\frac{\p v_j}{\p x_k} \)+(\mathbf{H}\cdot \dd )H_i\frac{\p \Psi_i}{\p v_{\ell}}\\
   =&-A_{{\ell}j}(v)\frac{\p v_j}{\p t}+\frac{\p A_{{\ell}j}(v)}{\p x_k}\frac{\p v_j}{\p x_k} +A_{{\ell}j}(v)\Delta v_j\\
   & -\frac{\p}{\p x_k}\(\frac{\p \Psi_i}{\p v_{\ell}}\)\(\frac{\p \Psi_i}{\p v_j}\frac{\p v_j}{\p x_k} \)+(\mathbf{H}\cdot \dd )H_i\frac{\p \Psi_i}{\p v_{\ell}}.
\end{split}
\end{equation}
In the second equality above we employed  \eqref{aijdeter}. On the other hand, it follows from \eqref{stereograph}
 that
\begin{equation}\label{jacobian}
  \frac{\p \Psi_i}{\p v_j}=
  \begin{pmatrix}
  \frac{2}{1+v_1^2+v_2^2}-\frac{4v_1^2}{(1+v_1^2+v_2^2)^2}&-\frac{4v_1v_2}{(1+v_1^2+v_2^2)^2}\\
  -\frac{4v_1v_2}{(1+v_1^2+v_2^2)^2}&\frac{2}{1+v_1^2+v_2^2}-\frac{4v_2^2}{(1+v_1^2+v_2^2)^2}\\
  -\frac{4v_1}{(1+v_1^2+v_2^2)^2}&-\frac{4v_2}{(1+v_1^2+v_2^2)^2}
  \end{pmatrix}.
\end{equation}
Recalling that $h=1+v_1^2+v_2^2$, we have a precise formula of \eqref{aijdeter},
\begin{equation}\label{aijformu}
    A_{{\ell}j}(\vv)=\frac{4}{h^2}\delta_{{\ell}j}.
\end{equation}
This  simplifies  \eqref{leftmul}  into
\begin{equation}\label{leftmu2}
 \begin{split}
    J_i\frac{\p \Psi_i}{\p v_{\ell}}  =&-\frac{4}{h^2}\delta_{{\ell}j}\frac{\p v_j}{\p t}+\frac{\p }{\p x_k}\(\frac{4}{h^2}\)\delta_{{\ell}j}\frac{\p v_j}{\p x_k} +\frac{4}{h^2}\delta_{{\ell}j}\Delta v_j\\
   &-\frac{\p}{\p x_k}\(\frac{\p \Psi_i}{\p v_{\ell}}\)\(\frac{\p \Psi_i}{\p v_j}\frac{\p v_j}{\p x_k} \)+(\mathbf{H}\cdot \dd )H_i\frac{\p \Psi_i}{\p v_{\ell}}.
 \end{split}
\end{equation}
To proceed, we denote
\begin{equation*}
B_{j\ell s}\triangleq\frac{\p^2 \Psi_i}{\p v_{\ell}\p v_s}\frac{\p \Psi_i}{\p v_j}.
\end{equation*}
Notice that
\begin{equation*}
  B_{j\ell s}+B_{s\ell j}=\frac{\p \Psi_i}{\p v_j}\frac{\p^2 \Psi_i}{\p v_{\ell}\p v_s}+\frac{\p \Psi_i}{\p v_s}\frac{\p^2 \Psi_i}{\p v_{\ell}\p v_j}=\frac{\p A_{sj}(\vv)}{\p v_{\ell}}.
\end{equation*}
By a permutation,
\begin{equation*}
 \begin{split}
    B_{j\ell s}=&\frac 12\(\frac{\p A_{sj}(\vv)}{\p v_\ell}+\frac{\p A_{j\ell}(\vv)}{\p v_s}-\frac{\p A_{\ell s}(\vv)}{\p v_j}\)
    =-\frac{4}{h^3}\(h_{,\ell}\delta_{sj}+h_{,s}\delta_{j\ell}-h_{,j}
    \delta_{\ell s}\),
 \end{split}
\end{equation*}
where   $h_{,\ell}$ is the abbreviation for $\frac{\p h}{\p v_{\ell}}=2v_\ell$.
Applying this formula to the fourth component of the right hand side in \eqref{leftmu2} gives
\begin{equation}\label{trinonline}
  \begin{split}
  -\frac{\p}{\p x_k}\(\frac{\p \Psi_i}{\p v_{\ell}}\)\(\frac{\p \Psi_i}{\p v_j}\frac{\p v_j}{\p x_k} \)&=-B_{j\ell s}\frac{\p v_j}{\p x_k}\frac{\p v_s}{\p x_k}\\
  &=\frac{4}{h^3}\(h_{,\ell}v_{s,k}v_{s,k}+h_{,s}v_{\ell,k}
  v_{s,k}-h_{,j}v_{j,k}v_{\ell, k}\)\\
  &=\frac{4}{h^3}h_{,\ell}|\nabla\vv|^2,  \end{split}
\end{equation}
where $v_{i,j}$ is the abbreviation of $\frac{ \p v_i}{\p x_j}$.
Plug \eqref{trinonline} into \eqref{leftmu2} to get
\begin{equation*}
 \begin{split}
   J_i\frac{\p \Psi_i}{\p v_{\ell}} =&-\frac{4}{h^2}\p_t v_{\ell}-\frac{8}{h^3}\nabla h\cdot\nabla v_{\ell} +\frac{4}{h^2}\Delta v_{\ell}+\frac{8}{h^3}v_{\ell}|\nabla\vv|^2 +(\mathbf{H}\cdot \dd )H_i\frac{\p \Psi_i}{\p v_{\ell}}.
 \end{split}
\end{equation*}
In  virtue of  \eqref{largeM}, this is equivalent to
  \begin{equation*}
    \begin{pmatrix}
      \frac{\p \Psi_1}{\p v_1}& \frac{\p \Psi_2}{\p v_1} & \frac{\p \Psi_3}{\p v_1}\\
      \\
      \frac{\p \Psi_1}{\p v_2}& \frac{\p \Psi_2}{\p v_2} & \frac{\p \Psi_3}{\p v_2}
    \end{pmatrix}\begin{pmatrix}
      J_1\\
      J_2\\
      J_3
    \end{pmatrix}=\frac 4{h^2}\begin{pmatrix}
      M_1\\
      M_2
    \end{pmatrix}.
  \end{equation*}
Note that $\dd\cdot\mathbf{J}\equiv0$, due to $|\dd|\equiv1$,
the above formula is equivalent to
\begin{equation}\label{equiv5}
  \mathbf{E}\mathbf{J}=\frac 4{h^2}(0,M_1,
  M_2)^T,
\end{equation}
 where $\mathbf{E}$ is  the $3\times 3$ matrix
    \begin{equation*}
    \mathbf{E}\triangleq\begin{pmatrix}
    d_1&d_2&d_3\\
    \\
      \frac{\p \Psi_1}{\p v_1}& \frac{\p \Psi_2}{\p v_1} & \frac{\p \Psi_3}{\p v_1}\\
      \\
      \frac{\p \Psi_1}{\p v_2}& \frac{\p \Psi_2}{\p v_2} & \frac{\p \Psi_3}{\p v_2}
    \end{pmatrix}.
  \end{equation*}
 As a result \eqref{equiv4} is a consequence of  $\det \mathbf{E}\neq 0$. Actually, using \eqref{aijformu},
  \begin{equation*}
    (\det \mathbf{E})^2=\det (\mathbf{E}\mathbf{E}^T)=\det\begin{pmatrix}
      1&0&0\\
      0&A_{11}&A_{12}\\
      0&A_{21}&A_{22}
    \end{pmatrix}^2=(2/h)^8.  \end{equation*}
Concerning the boundary condition, we have
   \begin{equation*}
    \begin{pmatrix}
    \p_\nu d_1\\
    \p_\nu d_2\\
    \p_\nu d_3
    \end{pmatrix}
=\begin{pmatrix}
    \frac{\p \Psi_1}{\p v_1} &\frac{\p \Psi_1}{\p v_2}   \\
    \\
      \frac{\p \Psi_2}{\p v_1} &\frac{\p \Psi_2}{\p v_2} \\
      \\
        \frac{\p \Psi_3}{\p v_1} &\frac{\p \Psi_3}{\p v_2}
     \end{pmatrix} \begin{pmatrix}
    \p_\nu v_1\\
    \\
    \p_\nu v_2\\
    \end{pmatrix}=\begin{pmatrix}
    \frac{\p \Psi_1}{\p v_1} &\frac{\p \Psi_1}{\p v_2}  & d_1 \\
    \\
      \frac{\p \Psi_2}{\p v_1} &\frac{\p \Psi_2}{\p v_2}& d_2 \\
      \\
        \frac{\p \Psi_3}{\p v_1} &\frac{\p \Psi_3}{\p v_2} & d_3
     \end{pmatrix} \begin{pmatrix}
    \p_\nu v_1\\
    \\
    \p_\nu v_2\\
    \\
    0
    \end{pmatrix}.
  \end{equation*}
  This together with $\det \mathbf{E}\neq 0$ implies the equivalence between boundary conditions $\p_\nu \dd=0$ and $\p_\nu \vv=0$.
  So we complete the proof.
\end{proof}


In order to reduce \eqref{gradientflow} to an internal  control system, we  write the last component of \eqref{hfstereo1} as $\chi_\omega \ff$, where $\chi_\o$ is the  characteristic function of an open subset $\o\subsetneqq\O$:
\begin{equation}
\frac{h^2}{4}(\mathbf{H}\cdot \dd )H_i \nabla_\vv \Psi_i(\vv)=\chi_\omega \ff.
\end{equation}
 In view of \eqref{jacobian},   this amounts to  solving the following algebraic equations of $\HH$ for given  $\vv=(v_1,v_2)$ and $\ff=(f_1,f_2)$:
 \begin{equation}\label{algebraic}
   \frac{2v_1H_1+2v_2 H_2+(1-v_1^2-v_2^2)H_3}{1+v_1^2+v_2^2}\begin{pmatrix}
    \frac12 (1+v_2^2-v_1^2)H_1-v_1v_2 H_2- v_1H_3\\
    \\
    -v_1v_2 H_1+\frac 12(1+v_1^2-v_2^2)H_2-v_2H_3
  \end{pmatrix}=\chi_\o\begin{pmatrix}
    f_1\\
    f_2
  \end{pmatrix}.
 \end{equation}

 \begin{lemma}\label{implicitfunc}
  For every $(\vv,\ff)\in C(\O;\mathbb{R}^4)$, equation \eqref{algebraic}  has a   solution $\HH=\HH(\ff,\vv)$   which depends analytically on $\vv$ and $\ff$ such that $\operatorname{supp}(\HH)\subset \o$.
 \end{lemma}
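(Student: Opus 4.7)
The plan is to exploit the algebraic structure of \eqref{algebraic} by expressing the unknown $\HH$ in the moving frame $\{\dd,\,\partial_{v_1}\mathbf{\Psi},\,\partial_{v_2}\mathbf{\Psi}\}$ adapted to the image sphere, where $\dd=\mathbf{\Psi}(\vv)$. This frame already appeared implicitly in the determinant computation for $\mathbf{E}$ in Proposition \ref{equiv3}, and it turns out to diagonalize the system completely.

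First I would observe that, using the Jacobian formula \eqref{jacobian}, the scalar prefactor in \eqref{algebraic} is precisely $\HH\cdot\dd$, while the column vector equals $\tfrac{h^2}{4}(H_i\,\partial_{v_\ell}\Psi_i)_{\ell=1,2}$. Hence \eqref{algebraic} is equivalent to
\begin{equation*}
(\HH\cdot\dd)\,\tfrac{h^2}{4}\,H_i\,\partial_{v_\ell}\Psi_i=\chi_\omega f_\ell,\qquad \ell=1,2.
\end{equation*}
Since $|\mathbf{\Psi}|^2\equiv1$, differentiation yields $\partial_{v_\ell}\mathbf{\Psi}\cdot\dd=0$, and by \eqref{aijformu} the tangent vectors $\partial_{v_1}\mathbf{\Psi},\partial_{v_2}\mathbf{\Psi}$ are linearly independent, so $\{\dd,\partial_{v_1}\mathbf{\Psi},\partial_{v_2}\mathbf{\Psi}\}$ is a basis of $\R^3$ at each point $\vv$.

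Writing $\HH=\alpha\,\dd+\beta_1\,\partial_{v_1}\mathbf{\Psi}+\beta_2\,\partial_{v_2}\mathbf{\Psi}$ one obtains $\HH\cdot\dd=\alpha$, and using \eqref{aijformu} once more $H_i\partial_{v_\ell}\Psi_i=A_{\ell j}\beta_j=\tfrac{4}{h^2}\beta_\ell$, so the equation collapses to the underdetermined scalar system $\alpha\beta_\ell=\chi_\omega f_\ell$ for $\ell=1,2$. I would then pick the concrete closed-form solution $\alpha=\chi_\omega$, $\beta_\ell=\chi_\omega f_\ell$, which satisfies $\alpha\beta_\ell=\chi_\omega^2 f_\ell=\chi_\omega f_\ell$ and yields
\begin{equation*}
\HH(x)=\chi_\omega(x)\bigl(\mathbf{\Psi}(\vv(x))+f_1(x)\,\partial_{v_1}\mathbf{\Psi}(\vv(x))+f_2(x)\,\partial_{v_2}\mathbf{\Psi}(\vv(x))\bigr).
\end{equation*}

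By construction $\operatorname{supp}\HH\subset\omega$, while the bracketed expression is polynomial in $\ff$ with coefficients that are rational functions of $\vv$ whose denominator $h=1+|\vv|^2$ never vanishes, hence a real-analytic function of $(\vv,\ff)\in\R^2\times\R^2$. The only genuinely nontrivial step is spotting the adapted-frame ansatz; once it is in place, the orthogonality relation $\partial_{v_\ell}\mathbf{\Psi}\cdot\dd=0$ together with the identity $A_{\ell j}=\tfrac{4}{h^2}\delta_{\ell j}$ make the reduction automatic, and the resulting two-equations-in-three-unknowns system admits the symmetric closed-form solution above.
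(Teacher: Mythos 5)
Your proposal is correct, and at its core it uses the same key trick as the paper: the bilinear system \eqref{algebraic} is linearized by prescribing the scalar factor $\HH\cdot\dd=\chi_\omega$ (in the paper this appears as the normalization $2v_1H_1+2v_2H_2+(1-v_1^2-v_2^2)H_3=(1+|\vv|^2)\chi_\omega$). Where you diverge is in how the resulting linear problem is solved. The paper augments the two equations with that normalization to form the $3\times 3$ system \eqref{algebraic-1}, checks that the coefficient matrix $\mathbf{A}$ has eigenvalues $\pm\tfrac12(1+|\vv|^2)\neq 0$, and concludes existence and analyticity from invertibility of $\mathbf{A}$ and the polynomial cofactors; you instead expand $\HH$ in the orthogonal frame $\{\dd,\p_{v_1}\mathbf{\Psi},\p_{v_2}\mathbf{\Psi}\}$, where the relations $\dd\cdot\p_{v_\ell}\mathbf{\Psi}=0$ and \eqref{aijformu} diagonalize the system into $\alpha\beta_\ell=\chi_\omega f_\ell$, and you read off the closed-form solution $\HH=\chi_\omega\(\dd+f_1\p_{v_1}\mathbf{\Psi}+f_2\p_{v_2}\mathbf{\Psi}\)$. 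One can check that your $\HH$ satisfies \eqref{algebraic-1}, so it coincides with the paper's (unique) solution of that reduced system; in effect you have computed $\mathbf{A}^{-1}$ explicitly. What your route buys is explicitness: analyticity and the support property are immediate from the formula, with no eigenvalue computation or appeal to cofactors, and the idempotence $\chi_\omega^2=\chi_\omega$ handles the degenerate region $\O\setminus\omega$ cleanly (exactly as it implicitly does in the paper). The paper's formulation, on the other hand, makes the ``underdetermined system plus chosen normalization'' structure explicit, which is the point emphasized in the abstract. Either argument fully proves the lemma.
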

 \begin{proof}
  The equation \eqref{algebraic} is underdetermined and might have multiple solutions. We look for a special solution  by setting
 $$
2v_1H_1+2v_2H_2+(1-v_1^2-v_2^2)H_3=(1+v_1^2+v_2^2)\chi_\o.
 $$
 Then \eqref{algebraic} can be reduced to the following linear equation about $H_i$
 \begin{equation}\label{algebraic-1}
   \begin{pmatrix}
    \frac12 (1+v_2^2-v_1^2)H_1-v_1v_2 H_2- v_1H_3\\
    \\
    -v_1v_2 H_1+\frac 12(1+v_1^2-v_2^2)H_2-v_2H_3\\ \\
    -v_1H_1-v_2H_2+\frac 12(-1+v_1^2+v_2^2)H_3
  \end{pmatrix}=\chi_\o\begin{pmatrix}
    f_1\\ \\
    f_2\\ \\
    -\frac 12(1+v_1^2+v_2^2)
  \end{pmatrix}.
 \end{equation}
Denote
 $$
 \begin{array}{ll}
 \displaystyle \mathbf{A}=\left(\begin{array}{cccc}
   & \frac12 (1+v_2^2-v_1^2) &-v_1v_2 &-v_1\\
    &-v_1v_2 & \frac12(1+v_1^2-v_2^2)&-v_2\\
    &-v_1&-v_2&\frac 12(-1+v_1^2+v_2^2)
  \end{array}\right).
  \end{array}
  $$
 Its eigenvalues   are
  \begin{equation*}
    \begin{split}
      \lambda_1&=-\tfrac 12(1+v_1^2+v_2^2),\\
      \lambda_2&=\lambda_3=\tfrac 12(1+v_1^2+v_2^2),
    \end{split}
  \end{equation*}
  and its cofactor matrix consists of entries that are polynomials of $v_1$ and $v_2$. Thus $\mathbf{A}$ is invertible, and $\mathbf{A}^{-1}$ depends analytically on $(v_1,v_2)$.
  This shows that \eqref{algebraic-1}  has a unique analytic  solution and the lemma is proved.
 \end{proof}

Thanks to Proposition \ref{equiv3} and Lemma \ref{implicitfunc}, the controllability of \eqref{gradientflow} is reduced to the following system with internal control:
   \begin{equation}\label{noncontrolstere}
        \left\{
        \begin{array}{rll}
          \p_t \vv-\Delta \vv&=-2\nabla \mathbf{v}\cdot\nabla \log (1+|\vv|^2) +\frac{2|\nabla \mathbf{v}|^2}{1+|\vv|^2} \mathbf{v}+ \chi_\o  \ff, &\text{in}~Q,\\
          \vv|_{t=0} &=\vv_0,&\text{in}~ \O,\\
        \p_\nu \vv&=0,&\text{on}~\Sigma.
        \end{array}
        \right.
     \end{equation}

The local controllability of \eqref{noncontrolstere} is actually a consequence of  \cite{fernandez2006exact}. For the convenience of the readers, we give the proof based on the  following  result:
 \begin{lemma}\label{linearcontrol}
Let $\mathbf{a}(x,t)\in L^\infty(Q;\mathbb{R}^{2\times 2})$ and  $\mathbf{y}_0\in L^2(\Omega;\mathbb{R}^2)$.
For every $T>0$,  the system
\begin{equation}\label{linearization1}
        \left\{
        \begin{array}{rll}
          \p_t \mathbf{y}-\Delta \mathbf{y}&=\mathbf{a}(x,t)\mathbf{y} + \chi_\o  \mathbf{u}, &\text{in}~Q,\\
          \mathbf{y}|_{t=0} &=\mathbf{y}_0,&\text{in}~ \O,\\
        \p_\nu \mathbf{y}&=0,&\text{on}~\Sigma,
        \end{array}
        \right.
     \end{equation}
 is null-controllable at $t=T$. Moreover,  the control $\mathbf{u}\in L^\infty(Q;\mathbb{R}^2)$ satisfies
\begin{equation}
\| \mathbf{u}\|_{L^\infty(Q)}\leq e^{c_0(\O,\omega)K\(T,\|\mathbf{a}\|_{L^\infty(Q)}\)}\| \mathbf{y}_0\|_{L^2(\O)},
\end{equation}
where $c_0(\O,\omega)>0$ is a generic constant and $K\(T,\|\mathbf{a}\|_{L^\infty(Q)}\)>0$ is non-decreasing on its second argument.

\end{lemma}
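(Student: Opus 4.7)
The plan is to follow the classical duality approach to null-controllability, combined with a global Carleman inequality for the adjoint system, and then upgrade the resulting $L^2$ control to an $L^\infty$ one following the weighted-space strategy of \cite{fernandez2006exact}.

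First, I would reduce the problem to an observability estimate on the backward adjoint system. Given $\mathbf{z}_T\in L^2(\O;\R^2)$, let $\mathbf{z}$ solve
\[
-\p_t \mathbf{z}-\Delta \mathbf{z}=\mathbf{a}^T(x,t)\mathbf{z}\text{ in }Q,\qquad \p_\nu \mathbf{z}=0\text{ on }\Sigma,\qquad \mathbf{z}(\cdot,T)=\mathbf{z}_T.
\]
By the Hilbert Uniqueness Method, the null-controllability of \eqref{linearization1} at time $T$ with an $L^2(Q)$ control of norm at most $C\|\mathbf{y}_0\|_{L^2(\O)}$ is equivalent to the observability inequality
\[
\|\mathbf{z}(\cdot,0)\|_{L^2(\O)}^2\leq C^2\iint_{\o\times(0,T)}|\mathbf{z}|^2\,dx\,dt.
\]

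Second, I would establish this observability inequality via a global Carleman estimate. Fix $\eta^0\in C^2(\overline\O)$, positive in $\O$, adapted to the homogeneous Neumann boundary condition, and with no critical points in $\overline{\O\setminus\o}$; then define the Fursikov--Imanuvilov weights $\alpha(x,t)$ and $\xi(x,t)$ in the usual manner, singular at $t=0$ and $t=T$. Applied component-wise to $\mathbf{z}$, the standard Carleman estimate for the heat operator yields
\[
s^3\iint_Q e^{-2s\alpha}\xi^3|\mathbf{z}|^2\,dx\,dt\leq C_1\iint_Q e^{-2s\alpha}|\mathbf{a}^T\mathbf{z}|^2\,dx\,dt+C_1 s^3\iint_{\o\times(0,T)}e^{-2s\alpha}\xi^3|\mathbf{z}|^2\,dx\,dt,
\]
with $C_1=C_1(\O,\o)$. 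Choosing the Carleman parameter $s\geq s_0(\O,\o)\bigl(T+T^2+T^2\|\mathbf{a}\|_{L^\infty(Q)}^{2/3}\bigr)$ absorbs the lower-order term on the right; a standard backward energy estimate on $[0,T/2]$, which produces a Gronwall-type factor $e^{cT\|\mathbf{a}\|_{L^\infty(Q)}}$, then gives an observability constant of the required form $e^{c_0(\O,\o)K(T,\|\mathbf{a}\|_{L^\infty(Q)})}$, with $K$ non-decreasing in its second argument.

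Third, to upgrade the control from $L^2$ to $L^\infty$, I would replace the standard HUM functional by a penalized weighted functional
\[
J_\varepsilon(\mathbf{u})=\tfrac12\iint_{\o\times(0,T)}\rho(t)^{-2}|\mathbf{u}|^2\,dx\,dt+\tfrac{1}{2\varepsilon}\|\mathbf{y}(\cdot,T)\|_{L^2(\O)}^2,
\]
with $\rho(t)$ vanishing exponentially as $t\uparrow T$. Its minimizers $\mathbf{u}_\varepsilon$ admit an explicit representation through an adjoint state, and weighted parabolic regularity in Fursikov--Imanuvilov spaces yields bounds on $\mathbf{u}_\varepsilon$ in a space continuously embedded into $L^\infty(Q)$, uniformly in $\varepsilon$. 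Letting $\varepsilon\to 0$ produces an $L^\infty(Q)$ null-control, the extra cost of the upgrade being only a multiplicative constant depending on $T$, so that the final bound retains the announced form.

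The main obstacle is not establishing the observability estimate itself --- which is a classical application of Carleman's method once $\eta^0$ is adapted to the Neumann setting --- but rather tracking explicitly how the observability constant depends on $T$ and $\|\mathbf{a}\|_{L^\infty(Q)}$, and combining this with the weighted parabolic regularity needed to replace the natural $L^2(Q)$ control with an $L^\infty(Q)$ one. This is precisely the content of \cite{fernandez2006exact}, and the argument can be transplanted directly to the present $2\times 2$ Neumann problem.
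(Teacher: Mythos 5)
Your proposal is correct and follows essentially the same route as the paper, which simply invokes \cite{fernandez2006exact} (duality, global Carleman estimate with the stated dependence on $T$ and $\|\mathbf{a}\|_{L^\infty(Q)}$, and the weighted construction giving an $L^\infty$ control), noting that the vectorial case is immediate because the control acts on every component. Your sketch fills in the standard details behind that citation, with the correct form of the observability constant and the $L^2$-to-$L^\infty$ upgrade.
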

The scalar version of the above result is proved in \cite{fernandez2006exact} using Carleman estimate. The proof can be directly adapted to the above vectorial case because the control is applied to every component of the system in \eqref{linearization1}.
To proceed, we need the   Kakutani's fixed point theorem. See for instance \cite[Chapter 2, Section 5.8]{MR1987179} or \cite[pp. 126]{MR1157815} for the proof.
\begin{prop}[Kakutani's fixed point theorem]\label{kaku prop}
Let $\mathcal{Z}$ be a non-empty, compact and convex subset of a Hausdorff locally convex topological vector space $\mathcal{Y}$, and  $2^{\mathcal{Z}}$ be its power set, i.e. the set of all subsets of $\mathcal{Z}$. Let $\Phi: \mathcal{Z}\to 2^\mathcal{Z}$ be  upper semi-continuous and $\Phi(x)$ is non-empty, compact, and convex for all $x\in \mathcal{Z}$. Then $\Phi$ has a fixed point in the sense that there exists $x\in \mathcal{Z}$ such that $x\in \Phi(x)$.
\end{prop}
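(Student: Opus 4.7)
The plan is to prove Kakutani's fixed point theorem in two stages: first the finite-dimensional case via Brouwer's fixed point theorem and a simplicial-selection argument, and then the general locally convex case via a Schauder-type finite-dimensional projection together with a limiting procedure.

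For the finite-dimensional reduction, assume first $\mathcal{Z} \subset \mathbb{R}^n$. I would triangulate $\mathcal{Z}$ with mesh size $\varepsilon>0$, pick a selector $f_\varepsilon(v)\in\Phi(v)$ at each vertex $v$, and extend $f_\varepsilon$ to an affine (hence continuous) map on each simplex, obtaining $f_\varepsilon:\mathcal{Z}\to\mathcal{Z}$. Brouwer's theorem yields a fixed point $x_\varepsilon = f_\varepsilon(x_\varepsilon)$. Choosing $\varepsilon_k\to 0$ and a subsequence $x_{\varepsilon_k}\to x^*$, one uses upper semi-continuity (equivalently the closed-graph property, valid because $\Phi$ has compact values on compact $\mathcal{Z}$) together with convexity of $\Phi(x^*)$ to show that the convex combination $f_{\varepsilon_k}(x_{\varepsilon_k})$ of vertex values drawn from $\Phi$ of points at distance at most $\varepsilon_k$ from $x_{\varepsilon_k}$ converges into $\Phi(x^*)$, so $x^*\in \Phi(x^*)$.

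For the general case I exploit local convexity to reduce to the previous step. Given a balanced convex $0$-neighborhood $U$ in $\mathcal{Y}$, by compactness of $\mathcal{Z}$ pick a finite $U$-net $\{x_1,\dots,x_N\}\subset\mathcal{Z}$ and a continuous partition of unity $\{\beta_i\}$ subordinate to $\{x_i+U\}$. Define the Schauder projection
\begin{equation*}
p_U(x)=\sum_{i=1}^{N}\beta_i(x)\,x_i,
\end{equation*}
which is continuous, maps $\mathcal{Z}$ into $C_U:=\operatorname{conv}\{x_1,\dots,x_N\}\subset\mathcal{Z}$, and satisfies $p_U(x)-x\in U$. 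Define the approximating set-valued map $\Phi_U:C_U\to 2^{C_U}$ by $\Phi_U(y)=p_U(\Phi(y))$; since $p_U$ is affine and continuous, $\Phi_U(y)$ is non-empty, compact, and convex, and upper semi-continuity transfers from $\Phi$ to $\Phi_U$. The finite-dimensional case then produces $y_U\in C_U$ and $z_U\in\Phi(y_U)$ with $y_U=p_U(z_U)$.

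The main obstacle, and the most delicate step, is the passage to the limit: because $\mathcal{Y}$ need not be metrizable, one must work with nets indexed by the directed family of $0$-neighborhoods under reverse inclusion. Using compactness of $\mathcal{Z}$, I would extract a subnet along which $y_U\to y^*$ and $z_U\to z^*$, and observe that $y_U-z_U=p_U(z_U)-z_U\in U$ forces $y^*=z^*$ in the locally convex topology. The closed-graph characterization of upper semi-continuity applied to $(y_U,z_U)\in\operatorname{graph}(\Phi)$ then gives $z^*\in\Phi(y^*)$, so $y^*\in\Phi(y^*)$. The two technical points to monitor carefully are that the $U$-net can be selected inside $\mathcal{Z}$ itself (so that $C_U\subset\mathcal{Z}$, guaranteed by convexity of $\mathcal{Z}$) and that net convergence, rather than sequential convergence, is used throughout — both are essential in the locally convex Hausdorff setting where neither metrizability nor second countability is assumed.
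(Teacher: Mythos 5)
The paper does not prove this proposition at all: it is quoted as a known result with references to Granas--Dugundji and Rudin, so your argument has to stand on its own. It follows the classical Kakutani/Fan--Glicksberg route (finite-dimensional case via Brouwer plus simplicial selection, then Schauder projections and a net limit), which is the right strategy, but it contains one step that fails as written. You claim that the Schauder projection $p_U(x)=\sum_i\beta_i(x)x_i$ is affine and conclude that $\Phi_U(y)=p_U(\Phi(y))$ has convex values. The functions $\beta_i$ of a partition of unity are merely continuous, not affine, so $p_U$ is a nonlinear map and the image $p_U(\Phi(y))$ of the convex set $\Phi(y)$ need not be convex; consequently the finite-dimensional Kakutani theorem cannot be applied to $\Phi_U$ as you defined it, and the exact identity $y_U=p_U(z_U)$ with a single $z_U\in\Phi(y_U)$ is not available. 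The standard repair is either to set $\Phi_U(y)=\operatorname{conv}\bigl(p_U(\Phi(y))\bigr)$ (compact and convex in the finite-dimensional space $\operatorname{span}C_U$, and still upper semi-continuous), in which case a fixed point gives $y_U=\sum_j\lambda_j p_U(z_j)$ with $z_j\in\Phi(y_U)$, and then $z_U:=\sum_j\lambda_j z_j\in\Phi(y_U)$ satisfies $y_U-z_U\in U$ by convexity of $U$ and of $\Phi(y_U)$; or to use Glicksberg's map $\Phi_U(y)=\bigl(\Phi(y)+\overline{U}\bigr)\cap C_U$, which is nonempty (here the balancedness of $U$ is what you actually need), convex, compact and upper semi-continuous. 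With either modification you only obtain the approximate relation $y_U-z_U\in U$ rather than an exact projected fixed point, but that is precisely what your net-limit argument uses, so the rest of your proof then goes through.

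Two smaller points. In the finite-dimensional step, a general compact convex set $\mathcal{Z}\subset\mathbb{R}^n$ need not be a polytope and so cannot be triangulated exactly; the usual fix is to prove the result on a simplex containing $\mathcal{Z}$ and compose $\Phi$ with the nearest-point retraction onto $\mathcal{Z}$, or to triangulate polytopal approximations. Finally, your appeal to the closed-graph property is legitimate but deserves a sentence: an upper semi-continuous map with compact values into a Hausdorff (hence, in the locally convex setting, regular) space has closed graph, which is what justifies passing $(y_U,z_U)\in\operatorname{graph}(\Phi)$ to the limit along the subnet.
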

We also need a few results concerning parabolic regularity:
\begin{prop}\label{lemma semigroup}
Let $N$ be the dimension of $\Omega$ and $u$ be a solution of
 \begin{equation}
\left\{
\begin{array}{rll}
\p_t u-\Delta u&=g,&~\text{in}~\O\times (0,T),\\
\p_\nu u&=0,&~\text{on}~\p\O\times (0,T).
\end{array}
\right.
  \end{equation}
 Then for any  $p\in (1,\infty)$, we have
 \begin{equation}\label{heat est2}
   \|u\|_{L^p(0,T;W^{2,p}(\O))}+\|\p_t u\|_{L^p(0,T;L^p(\O))}
  \leq   C(\O,p)\(\|u|_{t=0}\|_{W^{2,p}(\O)}+\|g\|_{L^p(\O\times (0,T))}\),
 \end{equation}
and
\begin{equation}\label{heat est1}
  \|u\|_{C([0,T];W^{1,\infty}(\O))}\leq C(\Omega,p)\(\|u|_{t=0}\|_{W^{1,\infty}(\O)}+\|g\|_{L^p(\O\times (0,T))}\),~\forall p\in (N+2,\infty).
\end{equation}

\end{prop}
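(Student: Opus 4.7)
The plan is to prove the two estimates separately by invoking classical parabolic regularity theory, since Proposition \ref{lemma semigroup} is in essence a special case of well-known maximal regularity results for the Neumann heat equation on a bounded $C^2$ domain.

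For \eqref{heat est2}, I would appeal to $L^p$-maximal regularity of the Neumann Laplacian $-\D_N$ on $L^p(\O)$ with $p\in(1,\infty)$. Since $\O$ has $C^2$ boundary, $-\D_N$ generates a bounded analytic semigroup on $L^p(\O)$ and admits a bounded $H^\infty$-calculus of angle strictly less than $\pi/2$; combined with the UMD property of $L^p$, Weis's theorem (or equivalently the Dore--Venni theorem) yields boundedness of the map $g\mapsto(\p_t u,\D u)$ from $L^p(0,T;L^p(\O))$ into itself. Combined with the standard semigroup estimate for the part carrying the initial datum, this gives \eqref{heat est2}. A more classical alternative is the direct singular-integral analysis of the parabolic fundamental solution with conormal boundary data \`a la Ladyzhenskaya--Solonnikov--Ural'tseva, together with Calder\'on--Zygmund theory.

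For \eqref{heat est1}, I would decompose $u=u_1+u_2$, where $u_1$ solves the homogeneous problem with initial datum $u|_{t=0}$ and $u_2$ the inhomogeneous problem with zero initial datum. The inhomogeneous part is clean: \eqref{heat est2} gives $u_2\in L^p(0,T;W^{2,p}(\O))\cap W^{1,p}(0,T;L^p(\O))$ with norm controlled by $\|g\|_{L^p(Q)}$, and the trace theorem of Amann / Lions--Magenes embeds this intersection space into $C([0,T];B^{2-2/p}_{p,p}(\O))$; for $p>N+2$ one has $2-\tfrac{2}{p}-\tfrac{N}{p}>1$, so the Sobolev embedding $B^{2-2/p}_{p,p}(\O)\hookrightarrow W^{1,\infty}(\O)$ gives the claimed bound on $u_2$. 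For $u_1$ I would use the Neumann maximum principle to get $\|u_1(t)\|_{L^\infty}\le \|u|_{t=0}\|_{L^\infty}$, and then a Bernstein-type argument applied to $\v:=|\nabla u_1|^2$, which satisfies
\begin{equation*}
\p_t\v-\D\v=-2|\nabla^2 u_1|^2\le 0 \quad\text{in}~Q, \qquad \p_\nu\v\le 0\quad\text{on}~\Sigma,
\end{equation*}
where the boundary inequality is precisely the place where convexity of $\p\O$ is used (the boundary term equals a multiple of the second fundamental form applied to $\nabla u_1$, and has the right sign on a convex domain). A second application of the maximum principle delivers $\|u_1(t)\|_{W^{1,\infty}}\le \|u|_{t=0}\|_{W^{1,\infty}}$, and summing the two contributions yields \eqref{heat est1}.

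The only delicate point is the Lipschitz preservation for the homogeneous part: without the convexity hypothesis on $\p\O$, the boundary term $\p_\nu|\nabla u_1|^2$ picks up a curvature contribution of uncontrolled sign and the maximum-principle argument breaks down. The convexity of $\p\O$ assumed at the outset of the paper is exactly what makes this step go through; the other ingredients, being standard abstract maximal regularity plus Sobolev embedding, are essentially cost-free.
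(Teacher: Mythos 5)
Your proposal is correct, and for \eqref{heat est2} and the zero--initial--data half of \eqref{heat est1} it coincides with the paper's argument: maximal $L^p$ regularity for the Neumann problem (the paper simply cites \cite{MR2318575}) followed by the trace embedding into $C([0,T];B^{2-2/p}_{p,p})$ and the Sobolev embedding available for $p>N+2$. Where you genuinely diverge is the homogeneous part $u_1=e^{t\Delta}u|_{t=0}$: the paper avoids the maximum principle there and instead derives the $L^p$--$L^q$ smoothing estimate \eqref{semi-group} from the two-sided Gaussian bounds \eqref{kernel est} on the Neumann heat kernel (Davies, Li--Yau), whereas you run a Bernstein argument on $|\nabla u_1|^2$ together with the $L^\infty$ maximum principle, using Hamilton's boundary computation \cite{MR0482822} on the convex domain for the sign of the normal derivative (note that the paper takes $\nu$ to be the \emph{inner} normal, so the inequality reads $\p_\nu|\nabla u_1|^2\geq 0$ on $\Sigma$, exactly as in Lemma \ref{gradientest}). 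Your route yields the bound with constant $1$ and is essentially self-contained; it is arguably more complete than the paper's sketch, since \eqref{semi-group} alone controls $\|e^{t\Delta}f\|_{L^q}$ but not the gradient, so the paper's treatment of the $g=0$ case implicitly still needs a kernel-gradient or regularity input, while the kernel-bound route has the advantage of not using convexity at all. Two minor points to tidy in your version: to run the maximum principle on $|\nabla u_1|^2$ with data merely in $W^{1,\infty}$ you should either approximate the initial datum by smooth functions compatible with the Neumann condition or work on $[\delta,T]$ and let $\delta\to 0$; and the convexity hypothesis, while exactly what makes \emph{your} boundary step work, is not strictly necessary for the estimate itself, so phrasing it as indispensable overstates the point.
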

\begin{proof}
Estimate \eqref{heat est2} follows from  parabolic theory, see for instance \cite{MR2318575}. Note that the regularity needed for the initial data is far from being optimal but is sufficient for later use.  Regarding  estimate \eqref{heat est1}, the case when $u|_{t=0}=0$  is a consequence of \eqref{heat est2} and Sobolev embedding. The case when $g=0$ follows from
the following estimate of  the analytic semigroup $e^{t\Delta}$ generated by the Neumann-Laplacian
\begin{equation}
\|e^{t\Delta} f\|_{L^{q}(\O)}\leq C(p,q)t^{-\frac N2\(\frac 1{p}-\frac 1{q}\)}\|f\|_{L^{p}(\O)},  \quad  \forall 1\leq p\leq q\leq \infty;~t\in (0,1).\label{semi-group}
\end{equation}
For the proof of  \eqref{semi-group},  we first recall from \cite[page 90] {MR1103113} or \cite{MR834612} that  the  heat kernel $K(t,x,y)$ of the Neumann-Laplacian   satisfies
\begin{equation}\label{kernel est}
C_1 t^{-\frac N 2}e^{-\frac{(x-y)^2}{4t}}\leq K(t,x,y)\leq C_2  t^{-\frac N 2}e^{-\frac{(x-y)^2}{8t}},\qquad \forall x,y\in\O~\text{and}~t\in (0,1).
\end{equation}
Since $e^{t\Delta}f(x)=\int_{\Omega} K(t,x,y) f(y)\,dy$, the  inequality \eqref{kernel est} together with  the Young's inequality of convolution leads to \eqref{semi-group}.
 \end{proof}

The combination of the previous  results leads to  the local controllability of \eqref{noncontrolstere}.
\begin{prop}\label{conlinear}
For every $T>0$, there exists a   constant $\epsilon_2=\epsilon_2(T)>0$ such that if
\begin{equation}\label{initialdata}
\|\vv_0\|_{W^{1,\infty}(\O)}\leq \min\{1,\epsilon_2\}~\text{and}~   \vv_0\in W^{2,p}(\O),~\text{for some fixed}~p>5,
\end{equation}
then the system \eqref{noncontrolstere} is null-controllable at time $T$.
\end{prop}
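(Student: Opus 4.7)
The plan is to recast \eqref{noncontrolstere} as a set-valued fixed-point problem and invoke Kakutani's theorem (Proposition \ref{kaku prop}), reducing matters to the linear null-controllability of Lemma \ref{linearcontrol}. The key observation is that the whole quadratic nonlinearity in \eqref{noncontrolstere} can be written in the zeroth-order form $\mathbf{a}(\vv,\nabla\vv)\,\vv$: using $\nabla\log(1+|\vv|^2)=2v_i\nabla v_i/(1+|\vv|^2)$, the $j$-th component of the nonlinear right-hand side equals $-\frac{4v_{j,k}v_{i,k}}{1+|\vv|^2}v_i+\frac{2|\nabla\vv|^2}{1+|\vv|^2}v_j$, so the resulting matrix $\mathbf{a}$ is continuous in its arguments with $\|\mathbf{a}(\vv,\nabla\vv)\|_{L^\infty(Q)}\leq C\|\nabla\vv\|_{L^\infty(Q)}^2$. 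This is precisely the structure on which Lemma \ref{linearcontrol} operates.

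Fix $p>5$, a radius $R\in(0,1]$, and a constant $M>0$ to be determined. Set
\begin{equation*}
\mathcal{Z} \triangleq \bigl\{\mathbf{z}\in L^p(0,T;W^{2,p}(\O;\R^2))\cap W^{1,p}(0,T;L^p(\O;\R^2)) : \|\mathbf{z}\|_{C([0,T];W^{1,\infty})}\leq R,\; \|\mathbf{z}\|_{L^p(W^{2,p})\cap W^{1,p}(L^p)}\leq M\bigr\},
\end{equation*}
endowed with the $C([0,T];W^{1,\infty})$ topology. The set $\mathcal{Z}$ is convex and nonempty (it contains $0$); the second bound together with Aubin--Lions and the embedding $W^{2,p}\hookrightarrow W^{1,\infty}$ makes it compact in that topology. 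For each $\mathbf{z}\in\mathcal{Z}$, define $\mathbf{a}_\mathbf{z}(x,t)\triangleq\mathbf{a}(\mathbf{z},\nabla\mathbf{z})\in L^\infty(Q)$, which satisfies $\|\mathbf{a}_\mathbf{z}\|_{L^\infty(Q)}\leq CR^2$. Apply Lemma \ref{linearcontrol} to the linear problem
\begin{equation*}
\p_t\vv-\Delta\vv=\mathbf{a}_\mathbf{z}\vv+\chi_\omega\ff,\qquad \vv|_{t=0}=\vv_0,\qquad \p_\nu\vv=0,
\end{equation*}
to obtain a nonempty convex set of null-controlled pairs $(\ff,\vv)$ with $\vv(T)\equiv0$ and $\|\ff\|_{L^\infty(Q)}\leq e^{c_0K(T,CR^2)}\|\vv_0\|_{L^2}$. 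Define $\Phi(\mathbf{z})$ to be the corresponding set of solutions $\vv$.

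The heart of the matter is the self-map property $\Phi(\mathcal{Z})\subset\mathcal{Z}$. Applying \eqref{heat est1} to $g=\mathbf{a}_\mathbf{z}\vv+\chi_\omega\ff$, and using H\"older in time to absorb the nonlinear contribution $\|\mathbf{a}_\mathbf{z}\vv\|_{L^p(Q)}\lesssim R^2\|\vv\|_{C([0,T];W^{1,\infty})}$ once $C R^2 T^{1/p}\leq 1/2$, one obtains
\begin{equation*}
\|\vv\|_{C([0,T];W^{1,\infty})} \leq C_T\bigl(\|\vv_0\|_{W^{1,\infty}}+e^{c_0K(T,CR^2)}\|\vv_0\|_{L^2}\bigr),
\end{equation*}
while a parallel use of \eqref{heat est2} yields the $L^p(W^{2,p})\cap W^{1,p}(L^p)$ bound in terms of $\|\vv_0\|_{W^{2,p}}$ and $\|\ff\|_{L^p(Q)}$, which determines the constant $M$. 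Choosing then $\epsilon_2$ small enough that the right-hand side of the displayed estimate is at most $R$ secures $\Phi(\mathcal{Z})\subset\mathcal{Z}$. Upper semicontinuity of $\Phi$ follows from $\mathbf{a}_{\mathbf{z}_n}\to\mathbf{a}_\mathbf{z}$ in $L^\infty(Q)$ whenever $\mathbf{z}_n\to\mathbf{z}$ in $C([0,T];W^{1,\infty})$, together with weak-$*$ compactness of bounded control sequences in $L^\infty(Q)$ and stability of the terminal condition $\vv(T)=0$ under weak limits; each image $\Phi(\mathbf{z})$ is convex, nonempty, and compact. Kakutani's theorem then produces a fixed point $\vv\in\Phi(\vv)$, which is the desired null-controlled solution of \eqref{noncontrolstere}. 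The main technical obstacle is calibrating the smallness of $R$ and $\epsilon_2$ against the exponential blow-up $e^{c_0K(T,CR^2)}$ of the linear control cost; this is possible because $K(T,\cdot)$ is finite for every fixed argument, so the ratio $R\big/(1+e^{c_0K(T,CR^2)})$ remains strictly positive, giving a valid $\epsilon_2(T)>0$.
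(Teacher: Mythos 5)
Your proposal is correct and follows essentially the same route as the paper: you freeze the quadratic nonlinearity as a zeroth-order coefficient $\mathbf{a}(\zz,\nabla\zz)\vv$, apply the linear null-controllability result of Lemma \ref{linearcontrol} to the frozen system, use the parabolic estimates of Proposition \ref{lemma semigroup} to obtain the self-map and compactness properties, and conclude with Kakutani's fixed point theorem, choosing $\epsilon_2$ after the radius so that the exponential control cost is a fixed finite constant. The only differences are cosmetic implementation choices (placing the $W^{2,p}$-parabolic bound inside $\mathcal{Z}$ and working in the $C([0,T];W^{1,\infty})$ topology rather than a negative Sobolev space, and absorbing the $\mathbf{a}\vv$ term via a H\"older-in-time smallness of $R$), which do not change the substance of the argument.
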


\begin{proof}
Since we are only concerned with short time controllability, without loss of generality, we  assume  $T\in (0,1)$.  We shall employ the Kakutani's fixed point theorem (in Proposition \ref{kaku prop}) to show the null-controllability.  To proceed, we choose $R>1$ and  introduce
\begin{equation}\label{trajectory}
       \mathcal{Z}\triangleq\left\{\zz\in C([0,T];W^{1,\infty}(\O))\mid \|\zz\|_{C([0,T];W^{1,\infty}(\O))}\leq R,~\zz(x,0)=\vv_0(x)\right\}.
     \end{equation}
 Then it follows from \eqref{initialdata} that $\mathcal{Z}$ is a nonempty convex and compact subset of some negative Sobolev space, say $H^{-1}(Q)$. Given $\zz\in \mathcal{Z}$, consider the linear  control system
\begin{equation}\label{linearcon}
  \left\{
  \begin{array}{rll}
    \p_t \vv-\Delta \vv&=\gg(\zz,\nabla \zz)\vv+  \chi_\o \ff,\quad &\text{in}~Q,\\
    \vv|_{t=0}&=\vv_0,\quad &\text{on}~\O,\\
        \p_\nu \vv&=0,&\text{on}~\Sigma,
     \end{array}
  \right.
\end{equation}
where $\gg$ is a  $2\times 2$ matrix-valued function    $$\gg(\vv,\nabla \vv)\triangleq\left\{\frac{-4\nabla v_i\cdot\nabla v_j+2|\nabla \vv|^2\delta_{ij}}{1+|\vv|^2}\right\}_{1\leq i,j\leq 2}.$$
In view of \eqref{trajectory}, we have  $\|\gg(\zz,\nabla\zz)\|_{L^\infty(Q)}\leq 8 R^2$. (Here the precise bound is not important.)
So we can   solve the linear system  \eqref{linearcon} and obtain a  control $\ff$ in the class
 \begin{equation}\label{eq:1.1}
   \mathcal{F}\triangleq\left\{\ff\in L^\infty(Q)\mid\|\ff\|_{L^\infty(Q)}\leq e^{c_0(\O,\omega)K(T,8R^2)}
   \|\vv_0\|_{L^2(\O)}\right\},
 \end{equation}
 where the constants $c_0$  and $K$ above were first introduced in Lemma \ref{linearcontrol}.
 It follows from Lemma \ref{linearcontrol} that, for every $\zz\in \mathcal{Z}$, there exists $\ff\in\mathcal{F}$ such that the system \eqref{linearcon} satisfies $\vv(\cdot,T)=0$. In other words, for every $\zz\in \mathcal{Z}$, the following set is not empty:
 \begin{equation}\label{control}
   \mathcal{C}(\zz)\triangleq \left\{\ff\quad\bigg|\quad\begin{split}
   & \|\ff\|_{L^\infty(Q)}\leq  e^{c_0(\O,\omega)K(T,8R^2)}
   \|\vv_0\|_{L^2(\O)},\\
   &\text{such that the solution to \eqref{linearcon} satisfies}~\vv(\cdot,T)=0
   \end{split}
\right\}.
 \end{equation}
 Moreover, combining  \eqref{eq:1.1}, $\|\zz\|_{L^\infty(0,T;W^{1,\infty}(\Omega))}\leq R$  and Proposition \ref{lemma semigroup}, we infer that there are two positive constants $C_1,C_2$ depending on $\Omega$   such that
\begin{equation}\label{parabolic6}
  \begin{split}
   \|\vv\|_{C([0,T];W^{1,\infty}(\O))}&\leq C_1\(\|\vv_0\|_{W^{1,\infty}(\O)}+\|\ff\|_{L^p(Q)}\)\\
   &\leq C_1\(\|\vv_0\|_{W^{1,\infty}(\O)}+e^{c_0(\O,\omega)K(T,8R^2)}
   \|\vv_0\|_{L^2(\O)}\),
  \end{split}
 \end{equation} with $p>5$, and
 \begin{equation}\label{parabolic2}
\begin{split}
  \|\vv\|_{L^p(0,T;W^{2,p}(\O))}+\|\p_t \vv\|_{L^p(0,T;L^p(\O))}&\leq C_2\(\|\vv_0\|_{W^{2,p}(\O)}+\|\ff\|_{L^p(Q)}\)\\
  &\leq C(\omega,\O,R,\vv_0).
\end{split}
 \end{equation}
  So for fixed $R>1$, by choosing a sufficiently small $\epsilon_2>0$ in \eqref{initialdata}, we infer from \eqref{parabolic6} that
  \begin{equation}\label{parabolic3}
  \begin{split}
   &\|\vv\|_{C([0,T];W^{1,\infty}(\O))} < R.
  \end{split}
 \end{equation}
Thanks to \eqref{initialdata} and \eqref{parabolic3}, we can define  a multi-valued map $\Phi:\mathcal{Z}\to 2^\mathcal{Z}$ by
  \begin{equation*}
    \Phi(\zz)\triangleq\left\{\vv\mid\vv~\text{is a solution of \eqref{linearcon} for some }~\ff\in\mathcal{C}(\zz)\right\}.
  \end{equation*}
It remains to  verify the hypothesis of Kakutani's fixed point theorem for  $\Phi$.  It is clear that $\mathcal{Z}$ is a closed, compact, convex subset of a negative Sobolev space. With $p>5$, the compactness of $\Phi(\zz)$ follows from \eqref{parabolic2} and the  compact embedding
  \begin{equation*}
  L^p(0,T;W^{2,p}(\O))\cap W^{1,p}(0,T;L^p(\O))\hookrightarrow C([0,T];W^{1,\infty}(\O)),
  \end{equation*}
  see e.g. \cite{simoncompact}. The continuity of $\Phi(\zz)$ follows from the linearity of \eqref{linearcon} and local continuity of   operator $\gg(\zz,\nabla\zz):W^{1,\infty}(\O)\mapsto L^\infty(\O)$. This completes the proof of the result.
 \end{proof}


\section{Classical Solution to  harmonic map heat flow}\label{regularity}
\setcounter{equation}{0}

In this section we consider  \eqref{gradientflow} with  $\hh=\l(t)\qq$ for some  $\qq\in\S^2$ and $\lambda(t)\in C^1([0,T])$, i.e.
  \begin{equation}\label{gradientflow1}
\left\{
\begin{array}{rll}
 \p_t\dd-\Delta \dd &=|\nabla \dd |^2\dd +  \p V(\dd),&~\text{in}~Q,\\
\p_\nu \dd&=0,&~\text{on}~\Sigma,
\end{array}
\right.
  \end{equation}
where  we denote
\begin{equation}\label{variationpotential}
  \p V(\dd)\triangleq (\mathbf{H}\cdot \dd )\mathbf{H}-(\mathbf{H}\cdot \dd )^2 \dd= \lambda^2(t) (\qq\cdot \dd) \qq-\lambda^2(t) (\qq\cdot\dd)^2 \dd.
 \end{equation}
  Note that \eqref{variationpotential} is the variation of $V(\dd)\triangleq-
    (\HH\cdot \dd )^2/2$ under the constraint $|\dd|=1$. With the notation $\mu(x,t)\triangleq \dd(x,t)\cdot \qq$, we have $V(\dd)=-\lambda^2(t)\mu^2/2$.

The main result of this section is given below, which is  essentially due to \cite{jager1979uniqueness} and \cite{chen1999maximum}. The  small but crucial  novelty we made
  is to show that  the gradient estimate is independent of $\lambda(t)$. Then choosing $\lambda(t)$ sufficiently large will force the solution $\dd$ to approach $\qq$ within any short time. Recall that we   assume $\p\O$ to be convex. This will be used   to handle the boundary conditions when applying the  maximum principle.
\begin{prop}\label{mainwellpose}
For arbitrary $T>0$ and $\alpha\in (0,1)$, assume $\dd_0\in C^{2+\a}(\overline{\O},\S^2)$ fulfills $\p_\nu\dd_0=0$ on the boundary $\p\O$ and
  \begin{equation}\label{bound}
 \eps_0\triangleq  \inf_{x\in\O}\dd_0(x)\cdot {\qq}>0.
  \end{equation}
  Then \eqref{gradientflow1}
   has a unique solution $\dd(x,t)\in C^{2+\a,1+\frac{\a}{2}}(\overline{\O}\times[0,T],\S^2)$ with initial data $\dd_0$. Moreover, $\dd$ satisfies
   \begin{equation}\label{eq:2.4}
        \sup_{\O\times [0,T]}|\nabla\dd(x,t)|\leq \frac 2{\eps_0}\sup_{x\in\O}|\nabla\dd_0|,
      \end{equation}
      and
\begin{equation}\label{maximum}
      \dd(x,t)\cdot \qq\geq  \eps_0,\quad \forall(x,t)\in\O\times [0,T].
    \end{equation}
\end{prop}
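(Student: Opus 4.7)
The plan is to combine a standard local existence result with two pointwise a priori bounds obtained via the maximum principle. The work of Jäger--Kaul and Chen already provides the architecture; the essential novelty required here is that the Bernstein-type gradient estimate must not deteriorate as $\l(t)$ grows, since in the application of Section 4 one will want $\l$ large. I proceed in three parts.

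\textbf{Local existence, preservation of $|\dd|\equiv1$, and lower bound on $\mu=\dd\cdot\qq$.} Since the nonlinearity in \eqref{gradientflow1} is polynomial in $(\dd,\nabla\dd)$ with time-dependent coefficients in $C^1([0,T])$, and $\dd_0\in C^{2+\a}(\overline{\O})$ is compatible with the Neumann condition, standard parabolic Schauder theory provides a unique local classical solution $\dd\in C^{2+\a,1+\a/2}(\overline{\O}\times[0,T^*))$ on a maximal existence interval. The constraint $|\dd|\equiv1$ persists because $g=|\dd|^2-1$ satisfies the linear Neumann problem \eqref{unique} with zero initial and boundary data. Taking the inner product of \eqref{gradientflow1} with $\qq$ and using $|\mu|\leq1$ gives
\begin{equation*}
\p_t\mu-\Delta\mu=\bigl(|\nabla\dd|^2+\l^2(t)(1-\mu^2)\bigr)\mu,\qquad \p_\nu\mu=0,
\end{equation*}
whose zero-order coefficient is non-negative. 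Comparison of $v=\mu-\eps_0$ with zero via the Neumann maximum principle yields \eqref{maximum}.

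\textbf{Bernstein estimate independent of $\l$.} This is the heart of the argument. Writing $\phi=|\nabla\dd|^2$, introduce the auxiliary function
\begin{equation*}
\Phi\triangleq\frac{\phi}{(\mu-\eps_0/2)^2},
\end{equation*}
which is well defined by the previous step. A direct Bochner-type computation, using $\dd\cdot\nabla\dd=0$ (a consequence of $|\dd|\equiv1$), gives
\begin{equation*}
\p_t\phi-\Delta\phi=2\phi^2-2|\nabla^2\dd|^2+2\l^2\bigl(|\nabla\mu|^2-\mu^2\phi\bigr),
\end{equation*}
which combined with the equation for $\mu$ above yields, after routine algebra, that the $\l^2$-contribution to $\p_t\Phi-\Delta\Phi$ equals $\tfrac{2\l^2}{(\mu-\eps_0/2)^3}\bigl[(\mu-\tfrac{\eps_0}{2})|\nabla\mu|^2-\mu(1-\tfrac{\eps_0}{2}\mu)\phi\bigr]$. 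Using $|\nabla\mu|^2\leq\phi$ and $|\mu|\leq1$, the bracket is bounded above by $\tfrac{\eps_0}{2}\phi(\mu^2-1)\leq0$, so the entire $\l^2$-contribution is non-positive. This is precisely the cancellation that eliminates $\l$ from the estimate. At an interior maximum of $\Phi$, the condition $\nabla\Phi=0$ gives $\nabla\phi=\tfrac{2\phi}{\mu-\eps_0/2}\nabla\mu$, and the Kato-type inequality $|\nabla^2\dd|^2\geq|\nabla\phi|^2/(4\phi)$ (a direct consequence of Cauchy--Schwarz) absorbs the remaining positive terms, producing $\p_t\Phi-\Delta\Phi\leq0$ at any interior maximum. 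A boundary maximum is ruled out by the Hopf lemma: the convexity of $\p\O$ combined with $\p_\nu\dd=0$ gives the classical one-sided bound $\p_\nu\phi\geq0$ (inner normal), and since $\p_\nu\mu=0$ one also has $\p_\nu\Phi\geq0$, contradicting the strict Hopf sign at a strict boundary maximum. The parabolic maximum principle then yields $\sup_{\overline{\O}}\Phi(\cdot,t)\leq\sup_{\overline{\O}}\Phi(\cdot,0)\leq 4\eps_0^{-2}\sup_{\O}|\nabla\dd_0|^2$, which rearranges into \eqref{eq:2.4} through $|\nabla\dd|\leq(\mu-\eps_0/2)\sqrt{\Phi}\leq(2/\eps_0)\sup_{\O}|\nabla\dd_0|$.

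\textbf{Global extension, uniqueness, and main obstacle.} Because \eqref{eq:2.4} is independent of both $t$ and $\l$, the right-hand side of \eqref{gradientflow1} is uniformly bounded in $L^\infty(\O\times[0,T^*))$. Schauder bootstrap upgrades this into uniform $C^{2+\a,1+\a/2}$ control, ruling out finite-time blow-up and extending the classical solution to all of $[0,T]$; uniqueness is standard once one has $L^\infty$ control of $\dd$ and $\nabla\dd$. The principal difficulty is the Bernstein step: a naive estimate applied to $|\nabla\dd|^2$ alone would produce a Gronwall factor involving $\l^2$, which would be fatal for the intended application in Section 4, where $\l$ must be chosen arbitrarily large to drive $\dd$ close to $\qq$ in arbitrarily short time. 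Normalizing by $(\mu-\eps_0/2)^2$, a choice dictated by the positivity $\mu\geq\eps_0$ and by the geometry of the hemisphere, is precisely what produces the $\l^2$ cancellation and makes the gradient estimate $\l$-free.
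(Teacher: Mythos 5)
Your proposal is correct and follows essentially the same route as the paper: your quotient $\Phi=|\nabla\dd|^2/(\dd\cdot\qq-\eps_0/2)^2$ is exactly the paper's Bernstein quantity with $\delta_0=\eps_0/2$, the $\lambda^2$-cancellation, the use of Hamilton's convexity bound $\p_\nu|\nabla\dd|^2\geq 0$ together with Hopf's lemma on the lateral boundary, the maximum-principle lower bound for $\mu=\dd\cdot\qq$, and the Schauder bootstrap for global existence and uniqueness all coincide with Lemma \ref{notflee}, Lemma \ref{gradientest} and the proof of Proposition \ref{mainwellpose}. The one refinement you should make is at the boundary step: Hopf's lemma needs $\Phi$ to be a subsolution in a neighborhood, so instead of absorbing the positive terms only at interior critical points via $\nabla\Phi=0$, keep the cross term as a drift and record the global inequality $\bigl(\p_t-\Delta-\tfrac{2\nabla\mu}{\mu-\eps_0/2}\cdot\nabla\bigr)\Phi\leq 0$, which your own Cauchy--Schwarz (Kato) step yields by completing the square, exactly as in the paper's estimate of $I_1$.
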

We start with a lemma saying that the projection of equation \eqref{gradientflow} to direction $\qq\in \S^2$ satisfies a  parabolic equation where the maximum principle applies:
\begin{lemma}\label{notflee}
   Under the assumptions of Proposition \ref{mainwellpose}, if $\dd\in C^{2+\a,1+\frac{\a}{2}}(\overline{Q},\S^2)$ is a solution to \eqref{gradientflow1}  with initial data $\dd_0$, then   \eqref{maximum} holds.
\end{lemma}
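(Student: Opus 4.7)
The plan is to derive a scalar parabolic equation for the projection $\mu \triangleq \dd \cdot \qq$ and push the lower bound $\eps_0$ forward in time via the weak minimum principle together with Hopf's lemma at the lateral boundary.

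First, I would take the inner product of \eqref{gradientflow1} with $\qq$. Using $|\qq|=1$ and \eqref{variationpotential}, this yields
\begin{equation*}
\p_t \mu - \Delta \mu = |\nabla \dd|^2 \mu + \l^2(t)\, \mu (1-\mu^2) = a(x,t)\,\mu,
\end{equation*}
where $a(x,t) \triangleq |\nabla\dd(x,t)|^2 + \l^2(t)(1-\mu^2(x,t))$. The non-negativity $a\geq 0$ follows from $|\dd|\equiv 1$, which forces $\mu\in[-1,1]$. Since $\dd \in C^{2+\a,1+\a/2}(\overline Q)$ and $\l\in C^1([0,T])$, the coefficient $a$ is bounded on $\overline Q$, and the Neumann condition $\p_\nu \dd=0$ descends to $\p_\nu \mu = 0$.

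Next I would recast \eqref{maximum} as a standard minimum principle problem for a linear equation with a favorable zeroth-order coefficient. Setting $w \triangleq \mu - \eps_0$, so that $w(\cdot,0)\geq 0$ by \eqref{bound} and $\p_\nu w=0$, I obtain
\begin{equation*}
\p_t w - \Delta w - a(x,t)\, w = a(x,t)\,\eps_0 \geq 0.
\end{equation*}
The coefficient $-a$ has the wrong sign for the minimum principle, so I would introduce the exponentially weighted function $\tilde w \triangleq e^{-Kt} w$ with $K>\|a\|_{L^\infty(Q)}$, producing
\begin{equation*}
\p_t \tilde w - \Delta \tilde w + (K-a)\,\tilde w = e^{-Kt} a\,\eps_0 \geq 0,
\end{equation*}
with the favorable coefficient $K-a>0$.

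Finally I would argue by contradiction. If $\tilde w$ attained a strictly negative value on $\overline Q$, then at a minimum point $(x^*,t^*)$ we would have $t^*>0$ because $\tilde w(\cdot,0)\geq 0$. In the interior case $x^*\in\O$, the criteria $\p_t\tilde w\leq 0$ and $\Delta\tilde w\geq 0$ combined with the differential inequality would force $(K-a)\tilde w(x^*,t^*)\geq 0$, contradicting $\tilde w(x^*,t^*)<0$ and $K-a>0$. The boundary case $x^*\in\p\O$ is ruled out by Hopf's boundary-point lemma, applicable to the uniformly parabolic operator $\p_t-\Delta$ on a $C^2$ domain with a bounded zeroth-order coefficient, which yields $\p_\nu \tilde w(x^*,t^*)>0$ in contradiction with $\p_\nu \tilde w=0$. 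Hence $\tilde w\geq 0$ on $\overline Q$, which is \eqref{maximum}. The only delicate issue in this scheme is this boundary step; notably it does not require the convexity of $\p\O$, a hypothesis that will become relevant only later when estimating $|\nabla \dd|$ in \eqref{eq:2.4}.
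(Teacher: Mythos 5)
Your proof is correct, and while it shares the paper's starting point (project the equation onto $\qq$ to get a scalar parabolic problem for $\mu=\dd\cdot\qq$ with Neumann data, then invoke a minimum principle), the way you secure the sign structure is genuinely different. The paper keeps the nonlinearity as a source term: it first uses continuity in time to guarantee $\mu>0$ on a short interval $[0,\delta]$, so that $|\nabla\dd|^2\mu+\l^2(\mu-\mu^3)\geq 0$ and $\mu$ is a supersolution of the plain heat equation there, and then iterates this argument on successive intervals to reach $t=T$. You instead observe that $\mu-\mu^3=\mu(1-\mu^2)$ and that $1-\mu^2\geq 0$ holds unconditionally from $|\dd|=1$, so the whole right-hand side is a linear zeroth-order term $a(x,t)\mu$ with $a\geq 0$ bounded; shifting by $\eps_0$ and using the exponential weight $e^{-Kt}$ then yields \eqref{maximum} in one global step, with no continuation/bootstrap in time. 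What your route buys is a cleaner, genuinely global-in-time argument that sidesteps the slightly delicate "repeat with the same $\delta$" step of the paper, and it makes the boundary treatment explicit (Hopf's lemma at a putative negative boundary minimum versus the Neumann condition), whereas the paper leaves that implicit in its appeal to the maximum principle; what the paper's route buys is brevity, since a nonnegative supersolution of the heat equation needs no weighting and no zeroth-order sign discussion. One small point worth stating in your boundary case: Hopf's lemma requires that the (negative) minimum value is not attained at nearby interior points, which indeed follows in your scheme because any interior attainment is already excluded by your pointwise argument, and the sign condition is met because the coefficient $K-a$ is nonnegative while the minimum is negative. Your closing remark is also accurate: convexity of $\p\O$ is not needed here and enters only in the gradient estimate \eqref{eq:2.4}.
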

\begin{proof}

By the assumption  \eqref{bound} and  the continuity of the solution $\dd$, we know that
\begin{equation*}
\mu(x,t)=\dd(x,t)\cdot \qq> 0,\quad \forall(x,t)\in\O\times [0,\delta]
\end{equation*}
for some $\delta>0$. Since $\mu\in (0,1]$, it follows from \eqref{variationpotential} that
  \begin{equation*}
    \p_t \mu-\Delta \mu=|\nabla \dd|^2 \mu+ \l^2(t)(\mu-\mu^3)\geq 0.
  \end{equation*}
  This together with the maximum principle leads to the lower bound \eqref{maximum} for $t\in [0,\delta]$. Now using $\mu(\cdot,\delta)$ as initial data, and by the same argument,  we deduce that  \eqref{maximum} holds for $t\in [0,2\delta]$. This process can be carried out as long as the solution exists.  Let $m$ be the first integer such that $m \delta\geq T$.  After repeating this argument $m$ times, we  show that \eqref{maximum} holds for $t\in [0,T]$.
  \end{proof}
 The next result is concerned with the gradient estimate of  the solution to \eqref{gradientflow1}, which follows from a  Bernstein type estimate.
 \begin{lemma}\label{gradientest}
  Under the assumptions of Proposition \ref{mainwellpose}, if  $\dd\in C^{2+\a,1+\frac{\a}{2}}(\overline{Q},\S^2)$ is a solution to \eqref{gradientflow1}  with initial data $\dd_0$, then
\eqref{eq:2.4} holds.
\end{lemma}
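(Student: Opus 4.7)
The plan is to set up a Bernstein-type auxiliary function whose evolution is free of $\lambda$ by design, and then apply the parabolic maximum principle. Write $\mu(x,t) := \dd(x,t)\cdot \qq$; by the preceding lemma, $\mu \geq \eps_0 > 0$ on $\overline{Q}$. Let $w := |\nabla \dd|^2$ and set
$$F := \frac{w}{\mu^2},$$
so that $F(\cdot,0) \leq \eps_0^{-2}\sup_\O|\nabla \dd_0|^2$.

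First I would derive, from \eqref{gradientflow1} together with the identity $\dd \cdot \nabla \dd = 0$ (coming from $|\dd|\equiv 1$), the evolution equations
$$(\partial_t - \Delta) w = 2 w^2 - 2|\nabla^2 \dd|^2 + 2\lambda^2|\nabla \mu|^2 - 2\lambda^2 \mu^2 w,$$
$$(\partial_t - \Delta) \mu = \mu w + \lambda^2 \mu(1-\mu^2).$$
Combining these via the product rule $(\partial_t-\Delta)(uv) = u(\partial_t-\Delta)v + v(\partial_t-\Delta)u - 2\nabla u\cdot\nabla v$ applied to $w\cdot\mu^{-2}$, a direct computation gives
$$(\partial_t - \Delta) F \leq -\frac{2|\nabla^2 \dd|^2}{\mu^2} + \frac{4}{\mu}\nabla F\cdot \nabla\mu + \frac{2 w |\nabla \mu|^2}{\mu^4} + \frac{2\lambda^2}{\mu^2}\left(|\nabla \mu|^2 - w\right).$$
Since $\mu = \qq\cdot\dd$ with $|\qq|=1$, Cauchy--Schwarz gives $|\nabla \mu|^2 \leq w$, so the last, $\lambda$-dependent term is non-positive. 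This is the crucial structural observation: the exponent $2$ in $\mu^{-2}$ is chosen precisely so that all $\lambda^2$-contributions collapse into a quantity with favorable sign, making the final bound independent of $\lambda(t)$.

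The maximum principle then closes the estimate. At an interior maximum $(x_0,t_0)$ of $F$, the condition $\nabla F = 0$ yields $\mu\nabla w = 2w\nabla \mu$, so $|\nabla w|^2 = 4w^2\mu^{-2}|\nabla \mu|^2$. Kato's inequality $|\nabla^2 \dd|^2 \geq |\nabla\sqrt{w}|^2 = |\nabla w|^2/(4w)$ then produces $|\nabla^2 \dd|^2 \geq w\mu^{-2}|\nabla \mu|^2$, which exactly cancels the term $2w|\nabla\mu|^2/\mu^4$ in the inequality; meanwhile the cross term $(4/\mu)\nabla F\cdot\nabla \mu$ vanishes at a critical point. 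Hence $\partial_t F \leq 0$ at any interior maximum. On the lateral boundary $\partial\O\times(0,T)$, the Neumann condition $\partial_\nu \dd = 0$ forces $\partial_\nu \mu = 0$ and $\partial_\nu F = \partial_\nu w/\mu^2$; a standard calculation exploiting the convexity of $\partial\O$ (i.e.\ the definite sign of the second fundamental form with respect to $\nu$) gives $\partial_\nu w \leq 0$, so Hopf's lemma excludes a strict boundary maximum.

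Putting these pieces together, $\sup_Q F \leq \sup_\O F(\cdot,0) \leq \eps_0^{-2}\sup_\O|\nabla \dd_0|^2$, and since $\mu\leq 1$ we obtain $|\nabla \dd|^2 = \mu^2 F \leq F$, which delivers \eqref{eq:2.4} (the factor $2$ in the stated bound is harmless slack). The most delicate point is arranging the \emph{simultaneous} cancellation at the critical point of $F$: the $\lambda$-independence rests on the Kato-type inequality $|\nabla\mu|^2\leq|\nabla\dd|^2$, while the closure of the gradient estimate rests on applying the other Kato inequality $|\nabla^2\dd|^2\geq|\nabla\sqrt{w}|^2$ precisely at the extremum. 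Both cancellations hinge on the specific choice $F=w/\mu^2$; any other exponent either fails to eliminate $\lambda$ (too small) or spoils the closure at the maximum (too large).
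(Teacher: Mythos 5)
Your setup is essentially the paper's Bernstein argument: the paper works with $A=e(\dd)/f^2$, $f=\dd\cdot\qq-\delta_0$, while you take the (slightly cleaner) choice $F=|\nabla\dd|^2/\mu^2$; your evolution equations for $w=|\nabla\dd|^2$ and $\mu$ are correct, and the mechanism that kills $\lambda$ --- the quotient by $\mu^2$ combined with $|\nabla\mu|\leq|\nabla\dd|$ --- is exactly the one in the paper. The gap is in how you close the maximum principle. Your differential inequality keeps the term
\begin{equation*}
\frac{2\,w\,|\nabla\mu|^2}{\mu^4}-\frac{2|\nabla^2\dd|^2}{\mu^2},
\end{equation*}
which has no sign in general; you only dispose of it \emph{at critical points} of $F$ via Kato's inequality. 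That suffices (after the standard $F-\eps t$ perturbation, which you should include, since ``$\p_tF\leq0$ at an interior maximum'' is not by itself a contradiction) to handle interior maxima. But it does not justify the boundary step: at a lateral boundary maximum you do not know $\nabla F=0$ (only the tangential gradient vanishes --- the normal derivative is precisely what is at stake), so the Kato cancellation is unavailable there, and Hopf's lemma requires $F$ to be a subsolution of a parabolic operator with bounded drift and nonpositive zeroth-order coefficient in a whole one-sided neighborhood of the point, not an inequality valid only on the critical set of $F$. As written, the sentence ``Hopf's lemma excludes a strict boundary maximum'' has no differential inequality to which Hopf applies, and this is where the proof would fail.

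The fix is exactly the paper's completing-the-square step, and it costs nothing: split the cross term as
\begin{equation*}
\frac{4\nabla w\cdot\nabla\mu}{\mu^{3}}-\frac{6w|\nabla\mu|^{2}}{\mu^{4}}
=\left[\frac{2\nabla w\cdot\nabla\mu}{\mu^{3}}-\frac{2w|\nabla\mu|^{2}}{\mu^{4}}\right]
+\frac{2}{\mu}\nabla F\cdot\nabla\mu,
\end{equation*}
and use $|\nabla w|\leq 2\sqrt{w}\,|\nabla^{2}\dd|$ to absorb the first bracket together with $-2|\nabla^2\dd|^2/\mu^2$ into $-2\bigl(\tfrac{|\nabla^{2}\dd|}{\mu}-\tfrac{\sqrt{w}|\nabla\mu|}{\mu^{2}}\bigr)^{2}\leq 0$. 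This yields the pointwise inequality $(\p_t-\Delta)F\leq \tfrac{2}{\mu}\nabla\mu\cdot\nabla F$ on all of $Q$ (the $\lambda^2$-term being nonpositive as you observed), to which the parabolic maximum principle with drift applies directly; a lateral maximum is then excluded by Hopf's lemma against the boundary inequality $\p_\nu|\nabla\dd|^2\geq 0$ (inner normal), which is Hamilton's convexity estimate that the paper cites --- your ``standard calculation'' is this nontrivial ingredient and should be referenced rather than asserted. With that repair your argument coincides with the paper's proof (the paper's shift $\delta_0>0$, finally $\delta_0=\eps_0/2$, is inessential and only changes the constant from $1/\eps_0$ to $2/\eps_0$ in \eqref{eq:2.4}).
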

\begin{proof}
We choose an arbitrary number  $\delta_0\in (0,\eps_0)$ and denote
   \begin{equation*}
 A(x,t)\triangleq\frac{e(\dd)}{f^2(\dd)},~\text{where}\qquad  e(\dd)\triangleq\frac 12|\nabla\dd(x,t)|^2,\qquad  f(\dd)\triangleq\dd(x,t)\cdot \qq-\delta_0.
   \end{equation*}
   We shall show that $A$ satisfies a parabolic inequality to which the maximum principle applies and yields   a bound on the gradient of $\dd$.
  We first deduce  from Lemma \ref{notflee}   that
  \begin{equation}\label{key1}
    f(\dd)\geq \eps_0-\delta_0>0.
  \end{equation}
On the other hand, using \eqref{gradientflow1} and the constraint $|\dd|=1$, we deduce that
  \begin{equation*}
    \begin{split}
      &\quad(\p_t-\Delta)e(\dd)\\
      &=\nabla(\dd_t-\Delta\dd):\nabla\dd-|\nabla^2\dd|^2\\
      &=\nabla|\nabla\dd|^2\dd:\nabla\dd+
      |\nabla\dd|^2\nabla\dd:\nabla\dd+\nabla\p V(\dd):\nabla \dd-|\nabla^2\dd|^2\\
      &=|\nabla\dd|^4+ \nabla\p V(\dd):\nabla \dd-|\nabla^2\dd|^2,
    \end{split}
  \end{equation*}
    and
    \begin{equation*}
    (\p_t-\Delta)f(\dd)=\dd\cdot \qq |\nabla\dd|^2+\p V(\dd)\cdot \qq.
    \end{equation*}
 As a result,
    \begin{equation*}
    \begin{split}
      &(\p_t-\Delta)A(x,t)\\
      &=\frac{(\p_t-\Delta)e(\dd)}{f^2}-\frac{2e(\dd)(\p_t-\Delta)f}{f^3}
      +\frac{4\nabla e(\dd)\cdot\nabla f}{f^3}-\frac{6e(\dd)|\nabla f|^2}{f^4}\\
      &=\frac{|\nabla\dd|^4+\nabla\p V(\dd):\nabla \dd-|\nabla^2\dd|^2}{f^2}-\frac{|\nabla\dd|^4(\dd\cdot \qq)+|\nabla\dd|^2\p V(\dd)\cdot \qq}{f^3}\\
      &\qquad
      +\frac{4\nabla e(\dd)\cdot\nabla f}{f^3}-\frac{6e(\dd)|\nabla f|^2}{f^4}\\
      &=-\frac{\delta_0|\nabla\dd|^4}{f^3}+\frac{\nabla\p V(\dd):\nabla \dd}{f^2}-\frac{|\nabla^2\dd|^2}{f^2}-\frac{|\nabla\dd|^2\p V(\dd)\cdot \qq}{f^3}\\&\qquad
      +\frac{4\nabla e(\dd)\cdot\nabla f}{f^3}-\frac{6e(\dd)|\nabla f|^2}{f^4}\triangleq I_1+I_2,
    \end{split}
  \end{equation*}
  or simply
  \begin{equation}\label{eq:parabolic}
    (\p_t-\Delta)A(x,t)= I_1+I_2,
  \end{equation}
  where
  \begin{subequations}
  \begin{align}
  I_1=&-\frac{|\nabla^2\dd|^2}{f^2}
      +\frac{4\nabla e(\dd)\cdot\nabla f}{f^3}-\frac{6e(\dd)|\nabla f|^2}{f^4}, \\
      I_2=&-\frac{\delta_0|\nabla\dd|^4}{f^3}+\frac{\nabla\p V(\dd):\nabla \dd}{f^2}-\frac{|\nabla\dd|^2\p V(\dd)\cdot \qq}{f^3}.
  \end{align}
  \end{subequations}
  It remains to treat $I_1$ and $I_2$:
  \begin{equation}\label{estimatei1}
    \begin{split}
      I_1&=-\frac{|\nabla^2\dd|^2}{f^2}
      +\frac{2\nabla e(\dd)\cdot\nabla f}{f^3}-\frac{2e(\dd)|\nabla f|^2}{f^4}+\frac{2\nabla e(\dd)\cdot\nabla f}{f^3}-\frac{4e(\dd)|\nabla f|^2}{f^4}\\
      &\leq-\frac{|\nabla^2\dd|^2}{f^2}
      +\frac{2|\nabla \dd||\nabla^2 \dd||\nabla f|}{f^3}-\frac{|\nabla \dd|^2|\nabla f|^2}{f^4}+\frac{2\nabla A\cdot\nabla f}{f}\\
      &=-\(\frac{|\nabla^2\dd|}{f}
      -\frac{|\nabla \dd||\nabla f|}{f^2}\)^2+\frac{2\nabla A\cdot\nabla f}{f}\leq \frac{2\nabla A\cdot\nabla f}{f}.
    \end{split}
  \end{equation}
To treat  $I_2$, we employ \eqref{variationpotential} and deduce
   \begin{equation}\label{key3}
    \p V(\dd)\cdot \qq= \lambda^2(\mu-\mu^3)\geq 0,
  \end{equation}
  since $\mu=\dd\cdot \qq\in [0,1]$.
As   $\lambda=\lambda(t)$, we have
  \begin{equation}\label{key2}
  \nabla\p V(\dd):\nabla \dd= \lambda^2|\nabla \mu|^2-\lambda^2\mu^2 |\nabla \dd|^2.
  \end{equation}
  So  it follows from  \eqref{key2}, \eqref{key3} and $|\nabla\mu|\leq |\nabla\dd|$ that
  \begin{equation}\label{estimatei2}
 \begin{split}
 I_2&= \frac{(\mu-\delta_0)\lambda^2|\nabla\mu|^2+(\delta_0\mu-1)\mu\lambda^2|\nabla\dd|^2-\delta_0|\nabla\dd|^4}{f^3}\\
 &\leq \frac{\delta_0(\mu^2-1)\lambda^2|\nabla\dd|^2-\delta_0|\nabla\dd|^4}{f^3}\leq 0.
 \end{split}
 \end{equation}

 Now plugging \eqref{estimatei1} and \eqref{estimatei2} into \eqref{eq:parabolic} leads to
 \begin{equation}\label{eq:parabolic1}
    (\p_t-\Delta)A(x,t)\leq \frac{2\nabla A\cdot\nabla f}{f}.  \end{equation}
    If the maximum of $A(x,t)$ is achieved at $(x_1,t_1)\in \Sigma=\p\O\times (0,T)$, then by the  strong maximum principle,
    \begin{equation*}
    \p_\nu A(x_1,t_1)<0.
    \end{equation*}
    On the other hand, since $\p\O$ is convex, it follows from  \cite[pp. 162]{MR0482822} that
    \begin{equation*}
\p_\nu|\nabla\dd|^2\geq 0~\qquad \text{on}~\Sigma.
    \end{equation*}
    As a result
    \begin{equation*}
\begin{split}
 \p_\nu   A(x,t)=\frac 12 \p_\nu|\nabla\dd|^2 |\dd\cdot\qq-\delta_0|^{-2}
 +\frac{|\nabla\dd|^2}2\p_\nu|\dd\cdot\qq-\delta_0|^{-2}\geq 0 ~\text{on}~\Sigma.
\end{split}
    \end{equation*}
         So we obtain a contradiction, and thus the maximum must be achieved on $\O\times \{0\}$:
      \begin{equation*}
    A(x,t)\leq \sup_{x\in\O}\frac{|\nabla \dd_0|^2}{2 f^2(\dd_0)} \leq \frac{1}{2(\eps_0-\delta_0)^2}\sup_{x\in\O}|\nabla\dd_0|^2.\end{equation*}
    This implies  the desired result by choosing $\delta_0=\eps_0/2$.
   \end{proof}
With the aid of the above lemmas, we can  give the proof of Proposition \ref{mainwellpose}:
\begin{proof}[Proof of Proposition \ref{mainwellpose}]

  Since \eqref{gradientflow1} is a semi-linear parabolic system, the existence and uniqueness of the local in time solution follow from standard theory (see e.g.  \cite[Chapter 15]{MR2744149}): there exists $T_0>0$ such that
  \begin{equation*}
 \| \dd\|_{C^{2+\a,1+\frac{\a}{2}}(\overline{\O}\times (0,T_0),\S^2)}\leq C(T_0,\dd_0,\lambda).
  \end{equation*}
Applying Lemma \ref{notflee} gives
  \begin{equation}\label{lowerbound}
    \mu(x,t)=\dd(x,t)\cdot \qq\geq \eps_0> 0~\text{in}~  \O\times (0,T_0).
  \end{equation}
  In order to   extend the solution to every $T>0$, we need to bound $\|\dd\|_{C^{2+\a,1+\frac{\a}{2}}(\overline{Q})}$ in terms of   $\|\dd_0\|_{C^{2+\alpha}(\overline{\O})}$ up to a constant that is independent of  $T$, and this is a consequence of   Lemma \ref{gradientest}. More precisely,   the right hand side of \eqref{gradientflow1} is bounded  in $L^\infty(\overline{Q})$  by a constant depending on $\sup_{x\in\O}|\nabla\dd_0|, \epsilon_0,\lambda$ and $\Omega$ but not on  $T$. So parabolic regularity theory implies $\|\dd\|_{C^{1+\a,1/2+\alpha/2}(\overline{Q})}$ is bounded by a constant that is independent of $T$. Consequently the right hand side of  \eqref{gradientflow1} lies in $C^{\a,\a/2}(\overline{Q})$, and thus the Schauder's estimate  implies
  \begin{equation*}
    \|\dd\|_{C^{2+\a,1+\frac{\a}{2}}(\overline{Q})}\leq C,
  \end{equation*}
  where $C$ is independent of $T$. This completes the proof of existence of global in time classical solution to \eqref{gradientflow1}. The uniqueness of the solution follows from the standard energy method and \eqref{eq:2.4} follows from Lemma \ref{gradientest}.
\end{proof}

Now we turn to the estimate of the time derivative:
\begin{lemma}\label{jagermaxi}
Let $\dd_1,\dd_2:\O\times (0,T)\to \S^2$ be classical solutions of \eqref{gradientflow1} and
\begin{subequations}
\begin{align}
\psi &=1-\dd_1\cdot\dd_2,\quad\psi_i=1-\dd_i\cdot \qq,\quad w(t)=-\log(1-t),\label{maximum2}\\
\Phi &=\sum_{i=1}^2w\circ\psi_i=-\sum_{i=1}^2\log (1-\psi_i).\label{maximum3}
\end{align}
\end{subequations}
Then the operator
\begin{equation}
L(f)\triangleq\nabla\cdot(e^{-\Phi}\nabla f)-e^{-\Phi}\p_t f
\end{equation}
satisfies
  \begin{equation}\label{supersolu1}
    L(e^{\Phi}\psi)\geq 0.
  \end{equation}
\end{lemma}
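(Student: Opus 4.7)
The expansion $L(e^{\Phi}\psi) = (\Delta-\p_t)\psi + \psi(\Delta-\p_t)\Phi + \nabla\Phi\cdot\nabla\psi$ follows immediately from the product rule, since the $e^{-\Phi}$ prefactor in $L$ cancels the $e^{\Phi}$ generated by $\nabla(e^{\Phi}\psi)$ and $\p_t(e^{\Phi}\psi)$. My plan is to compute each of the three summands using \eqref{gradientflow1} and the constraints $|\dd_i|=1$, and then recombine them into a perfect square plus manifestly nonnegative remainders. The key inputs I will exploit are the identity $\p V(\dd_i)\cdot\qq = \l^2\mu_i(1-\mu_i^2)$ (a direct consequence of \eqref{variationpotential}), the chain-rule relations $w'(\psi_i)=1/\mu_i$ and $w''(\psi_i)=1/\mu_i^2$, and the strict positivity $\mu_i>0$ granted by Lemma~\ref{notflee}.

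Computing $(\Delta-\p_t)\psi$ from $\psi = 1 - \dd_1\cdot\dd_2$ and substituting \eqref{gradientflow1} produces two kinds of terms: the harmonic-map part, which after the elementary identity $-2\nabla\dd_1\!:\!\nabla\dd_2 = |\nabla(\dd_1-\dd_2)|^2 - |\nabla\dd_1|^2 - |\nabla\dd_2|^2$ rearranges as $-(|\nabla\dd_1|^2+|\nabla\dd_2|^2)\psi + |\nabla(\dd_1-\dd_2)|^2$; and the magnetic part $\p V(\dd_1)\!\cdot\!\dd_2 + \dd_1\!\cdot\!\p V(\dd_2)$, which using $\dd_1\!\cdot\!\dd_2 = 1-\psi$ simplifies to $2\l^2\mu_1\mu_2 - \l^2(\mu_1^2+\mu_2^2)(1-\psi)$. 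A parallel computation yields $(\Delta-\p_t)\Phi = \sum_{i=1}^{2}\bigl[|\nabla\dd_i|^2 + \l^2(1-\mu_i^2) + |\nabla\psi_i|^2/\mu_i^2\bigr]$. When multiplied by $\psi$ and added to $(\Delta-\p_t)\psi$, the isotropic $\pm(|\nabla\dd_1|^2+|\nabla\dd_2|^2)\psi$ terms cancel, and the magnetic contributions collapse to $-\l^2(\mu_1-\mu_2)^2 + 2\l^2\psi$, which is nonnegative since $(\mu_1-\mu_2)^2 = |(\dd_1-\dd_2)\cdot\qq|^2 \leq |\dd_1-\dd_2|^2 = 2\psi$.

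All that then remains is to show $|\nabla(\dd_1-\dd_2)|^2 + \nabla\Phi\cdot\nabla\psi + \psi\sum_i|\nabla\psi_i|^2/\mu_i^2 \geq 0$; this is the step where the sign of the cross term $\nabla\Phi\cdot\nabla\psi$ is indefinite and has to be absorbed by completing a square. At any point where $\psi>0$, Cauchy--Schwarz applied to $\psi = \tfrac12|\dd_1-\dd_2|^2$ gives $|\nabla\psi|^2 \leq 2\psi\,|\nabla(\dd_1-\dd_2)|^2$, while a second Cauchy--Schwarz on the two-term sum $\nabla\Phi = \sum_i \nabla\psi_i/\mu_i$ yields $|\nabla\Phi|^2 \leq 2\sum_i |\nabla\psi_i|^2/\mu_i^2$; these two bounds together estimate the expression below by $\frac{1}{2\psi}\bigl|\nabla\psi + \psi\nabla\Phi\bigr|^2 \geq 0$. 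At points where $\psi=0$, the nonnegative function $\psi$ attains its minimum, so $\nabla\psi=0$ there and the inequality reduces to $|\nabla(\dd_1-\dd_2)|^2\geq 0$. The main technical obstacle I foresee is purely algebraic bookkeeping: making sure the isotropic gradient terms cancel exactly and that the two Cauchy--Schwarz estimates dovetail precisely to produce the perfect square above, with the positivity of $\mu_i$ being essential for $\Phi$ and the terms $|\nabla\psi_i|^2/\mu_i^2$ to remain finite throughout $\overline{Q}$.
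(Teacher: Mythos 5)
Your proposal is correct and follows essentially the same route as the paper: the same expansion of $L(e^{\Phi}\psi)$, the same use of \eqref{gradientflow1} to compute $(\Delta-\p_t)\psi$ and $(\Delta-\p_t)\Phi$, the Kato-type bound $|\nabla\psi|^2\leq 2\psi|\nabla(\dd_1-\dd_2)|^2$, the case split at $\psi^{-1}(0)$, and the same final positivity $\l^2|\dd_1-\dd_2|^2-\l^2(\qq\cdot(\dd_1-\dd_2))^2\geq 0$. The only cosmetic difference is that you absorb the cross term $\nabla\Phi\cdot\nabla\psi$ by completing the single square $\tfrac{1}{2\psi}\l|\nabla\psi+\psi\nabla\Phi\r|^2$ at the end, whereas the paper applies Young's inequality term by term using $w''=(w')^2$ before invoking the Kato bound; these are the same estimates rearranged.
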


\begin{proof}
Since $|\dd_i|=1$, the following formula will be frequently employed in the sequel:
\begin{equation}\label{simple1}
2\psi=2(1-\dd_1\cdot\dd_2)=|\dd_1-\dd_2|^2.
\end{equation}

We first prove the following inequality:
\begin{equation}\label{maximum1}
    L(e^{\Phi}\psi)\geq\left\{
    \begin{array}{ll}
        -\p_t \psi+\Delta \psi,~&x\in  \psi^{-1}(0),\\
        \displaystyle-\p_t \psi+\Delta \psi-\frac{|\nabla\psi|^2}{2\psi}+\psi\sum_{i=1}^2
        \frac{-\p_t\psi_i+\Delta\psi_i}{1-\psi_i},~& x\in \O\backslash\psi^{-1}(0).
    \end{array}
    \right.
  \end{equation}
  Direct computation shows
  \begin{equation}\label{maximum4}
    \begin{split}
    L(e^{\Phi}\psi)&=-\p_t\Phi\psi-\p_t \psi+\nabla\cdot(\nabla\Phi\psi+\nabla\psi)\\
      &=(-\p_t \Phi+\Delta\Phi)\psi+\nabla\Phi\cdot\nabla\psi-\p_t \psi+\Delta\psi.
    \end{split}
  \end{equation}
On $\psi^{-1}(0)$, it holds $\dd_1=\dd_2$ and thus
\begin{equation}
\nabla\psi=(\dd_1-\dd_2)\cdot\nabla (\dd_1-\dd_2)=0 \qquad\text{in}~\psi^{-1}(0).
\end{equation}
 These together with \eqref{psi est} below and \eqref{variationpotential} imply
 \begin{equation}
 L(e^{\Phi}\psi)=-\p_t \psi+\Delta \psi\geq 0 \qquad\text{in}~\psi^{-1}(0).
 \end{equation}
It remains  to consider the case when $x\in \Omega\backslash\psi^{-1}(0)$. It follows from the Cauchy-Schwartz inequality and the identity $w''=(w')^2$ that
  \begin{equation*}
    \begin{split}
      &(-\p_t \Phi+\Delta\Phi)\psi+\nabla\Phi\cdot\nabla\psi\\
    =&\sum_{i=1}^2\left[\psi(-\p_t\psi_i+\Delta\psi_i) w'\circ\psi_i +\psi|\nabla\psi_i|^2 w''\circ\psi_i+\nabla\psi_i\cdot\nabla\psi \,w'\circ\psi_i\right]\\
    \geq & \sum_{i=1}^2\left[\psi\(\frac{-\p_t\psi_i+\Delta\psi_i}{1-\psi_i}+|\nabla\psi_i|^2 w''\circ\psi_i\)-\psi(w'\circ\psi_i)^2|\nabla\psi_i|^2-\frac{|\nabla\psi|^2}{4\psi}\right]\\
    = &\psi \sum_{i=1}^2\frac{-\p_t\psi_i+\Delta\psi_i}{1-\psi_i}-\frac{|\nabla\psi|^2}{2\psi}
    \end{split}
  \end{equation*}
  This together with \eqref{maximum4} implies \eqref{maximum1}.

To proceed, we shall compute $-\p_t \psi+\Delta\psi$ using the first equation in \eqref{gradientflow1}
   \begin{equation}\label{psi est}
     \begin{split}
       &-\p_t \psi+\Delta\psi\\
       &=(\p_t\dd_1-\Delta\dd_1)\cdot\dd_2+\dd_1\cdot(\p_t\dd_2-\Delta\dd_2)-2\nabla\dd_1:\nabla\dd_2\\
       &=\(|\nabla \dd_1|^2+|\nabla \dd_2|^2\)\dd_1\cdot\dd_2+\p V(\dd_1)\cdot\dd_2+\p V(\dd_2)\cdot\dd_1-2\nabla\dd_1:\nabla\dd_2.
     \end{split}
   \end{equation}
   We also have
   \begin{equation}
       \psi\sum_{i=1}^2|\nabla\dd_i|^2=(1-\dd_1\cdot\dd_2)\(|\nabla \dd_1|^2+|\nabla \dd_2|^2\).
\end{equation}
Adding up the above two formulaes and using \eqref{simple1} yield:
   \begin{equation*}
     \begin{split}
      -\p_t\psi+ \Delta\psi+\psi\sum_{i=1}^2|\nabla\dd_i|^2 &=\p V(\dd_1)\cdot\dd_2+\p V(\dd_2)\cdot\dd_1+|\nabla(\dd_1-\dd_2)|^2\\
       &\geq\p V(\dd_1)\cdot\dd_2+\p V(\dd_2)\cdot\dd_1+\frac{\left|\nabla |\dd_1-\dd_2|^2\right|^2}{4|\dd_1-\dd_2|^2}\\
       &=\p V(\dd_1)\cdot\dd_2+\p V(\dd_2)\cdot\dd_1+\frac{\left|\nabla \psi\right|^2}{2\psi}.
     \end{split}
   \end{equation*}
By a similar calculation, we obtain
   \begin{equation*}
     \begin{split}
       \psi \sum_{i=1}^2\frac{-\p_t\psi_i+\Delta\psi_i}{1-\psi_i}=\psi\sum_{i=1}^2|\nabla\dd_i|^2+\psi\sum_{i=1}^2\frac{\p V(\dd_i)\cdot \qq}{\dd_i\cdot \qq}.
     \end{split}
   \end{equation*}
   Adding up the above two inequalities and then using \eqref{variationpotential} lead to
        \begin{equation*}
     \begin{split}
       -\p_t\psi&+\Delta\psi-\frac{|\nabla\psi|^2}{2\psi}+\psi \sum_{i=1}^2\frac{-\p_t\psi_i+\Delta\psi_i}{1-\psi_i} \\
       \geq&\p V(\dd_1)\cdot\dd_2+\p V(\dd_2)\cdot\dd_1+\psi\sum_{i=1}^2\frac{\p V(\dd_i)\cdot \qq}{\dd_i\cdot \qq}\\
       =&\l^2(\dd_1-\dd_2)^2-\l^2(\qq\cdot(\dd_1-\dd_2))^2\geq 0.
     \end{split}
   \end{equation*}
   This together with \eqref{maximum1} leads to the desired result.
    \end{proof}
    The above lemma   implies  the estimate of  the time  derivative of \eqref{gradientflow1}.
    \begin{prop}\label{eq:timeest}
      Let $\dd\in C^{2+\a,1+\frac\a 2}(\overline{Q})$ be a solution of \eqref{gradientflow1} with
      \begin{equation}
      \eps_1\triangleq \inf_{x\in\O}\dd(x,T_1)\cdot \qq> 0,
      \end{equation}
      for some $T_1\in (0,T)$.
Then for any $T_2\in (T_1,T)$, the following inequality holds:      \begin{equation}
        \sup_{\O\times [T_1,T_2]}|\p_t \dd(x,t)|\leq \eps_1^{-1}\sup_{\O}|\p_t \dd(x,T_1)|.
      \end{equation}
\end{prop}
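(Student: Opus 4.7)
The plan is to reduce the time-derivative estimate to a comparison of two solutions by a time-shift trick: apply Lemma~\ref{jagermaxi} to the pair $\dd_1(x,t):=\dd(x,t+h)$ and $\dd_2(x,t):=\dd(x,t)$, for $h>0$ small (say $h<T-T_2$). Both are classical solutions of \eqref{gradientflow1} on $\overline{\O}\times[T_1,T_2]$, and the crucial identity $2\psi(x,t)=|\dd(x,t+h)-\dd(x,t)|^2$ (from \eqref{simple1}) shows that controlling $\psi$ is equivalent to controlling the forward difference quotient whose limit as $h\to 0^+$ is $|\p_t \dd|^2$.

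The main step is a maximum principle argument applied to $u:=e^\Phi\psi$. By Lemma~\ref{jagermaxi}, $L(u)\geq 0$ in $\O\times(T_1,T_2)$. Because $\p_\nu\dd\equiv 0$ on $\Sigma$, the chain rule yields $\p_\nu\psi=\p_\nu\psi_i=0$, hence $\p_\nu\Phi=0$ and therefore $\p_\nu u=e^\Phi(\psi\,\p_\nu\Phi+\p_\nu\psi)=0$ on the lateral boundary. Applying the weak maximum principle for the divergence-form operator $L$, and combining it with Hopf's lemma to exclude lateral-boundary maxima (Hopf would force $\p_\nu u(x^*,t^*)<0$ at such a maximum with $\nu$ the inner normal, contradicting $\p_\nu u=0$), I would conclude
$$\sup_{\overline{\O}\times[T_1,T_2]} u \;\leq\; \sup_{x\in\overline{\O}} u(x,T_1).$$

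To convert this into the stated estimate, I would reapply Lemma~\ref{notflee} starting from time $T_1$ (whose initial datum $\dd(\cdot,T_1)$ satisfies $\inf \dd(\cdot,T_1)\cdot\qq\geq\eps_1$) to deduce $\dd(x,t)\cdot\qq\geq\eps_1$ throughout $\overline{\O}\times[T_1,T_2]$. Since $1-\psi_i=\dd_i\cdot\qq\in[\eps_1,1]$, the weight satisfies $1\leq e^\Phi = [(1-\psi_1)(1-\psi_2)]^{-1}\leq \eps_1^{-2}$. The maximum-principle bound then reads
$$\psi(x,t)\;\leq\; e^\Phi(x,t)\psi(x,t)\;\leq\; \eps_1^{-2}\sup_{x'\in\O}\psi(x',T_1),$$
and substituting $2\psi=|\dd(\cdot,\cdot+h)-\dd(\cdot,\cdot)|^2$, dividing by $h^2$, and sending $h\to 0^+$ (justified since $\p_t\dd\in C^{\a,\a/2}(\overline{Q})$) yields the pointwise bound $|\p_t \dd(x,t)|^2\leq \eps_1^{-2}\sup_{x'\in\O}|\p_t\dd(x',T_1)|^2$, from which the supremum on the left delivers the conclusion.

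The main obstacle I foresee is the lateral-boundary step: although the Neumann calculation $\p_\nu u=0$ is a short chain-rule computation, one must invoke a Hopf-type lemma for the divergence-form operator $L$, whose coefficients depend smoothly on $\Phi$ (and hence on $\dd_1,\dd_2$), on a domain with only $C^2$ boundary. Once this is secured, all remaining steps are algebraic manipulations combined with the already-established lower bound of Lemma~\ref{notflee}.
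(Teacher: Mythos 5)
Your proposal is correct and follows essentially the same route as the paper's proof: compare $\dd(\cdot,\cdot)$ with its time-shift $\dd(\cdot,\cdot+h)$, apply the supersolution property of Lemma~\ref{jagermaxi} to $e^{\Phi}\psi$ together with the maximum principle (the Neumann condition killing lateral-boundary maxima), bound the weight via Lemma~\ref{notflee} restarted at $T_1$ so that $\eps_1^2\leq e^{-\Phi}\leq 1$, and then divide by $h^2$ and let $h\to 0$. Your explicit invocation of Hopf's lemma at the lateral boundary merely spells out what the paper leaves implicit, so there is no substantive difference.
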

\begin{proof}
For any $t_0\in (T_1,T_2)$, there exists $h_0>0$ such that $t_0+h_0<T_2$. For any $h\in (0,h_0)$, the functions $\dd_1(x,t)\triangleq\dd(x,t)$ and $\dd_2(x,t)\triangleq\dd(x,t+h)$ are  well defined on $\O\times [T_1,t_0]$ and are both  solutions to \eqref{gradientflow1}.
The boundary condition of \eqref{gradientflow1} implies
  \begin{equation*}
    \sup_{\p\O\times[T_1,t_0]} \p_{\nu}\(e^{\Phi}\psi(x,t)\)=0.
  \end{equation*}
  So   \eqref{supersolu1} in Lemma \ref{jagermaxi} together with the  maximum principle imply
  \begin{equation}\label{MP1}
    \sup_{\O\times (T_1,t_0)}e^{\Phi}\psi(x,t)\leq \sup_{\O\times{\{T_1\}}} e^{\Phi}\psi(x,t).
  \end{equation}
    Moreover Lemma \ref{notflee} implies
  \begin{equation*}
    0<\eps_1^2\leq e^{-\Phi}=(\dd_1\cdot \qq)(\dd_2\cdot \qq)\leq 1.
  \end{equation*}
  Consequently,
  \begin{equation*}
    \sup_{\O\times(T_1,t_0)} |\dd(x,t+h)-\dd(x,t)|^2\leq \eps_1^{-2} \sup_{\O}  |\dd(x,T_1+h)-\dd(x,T_1)|^2.
  \end{equation*}
  Dividing the above estimate by $h^2$ and taking  $h\to 0$ lead to the desired estimate since $t_0$ is arbitrary.
\end{proof}
We end this   section by studying the regularizing effect of  \eqref{gradientflow1} when $\lambda(t)\equiv 0$:
  \begin{equation}\label{gradientflow2}
\left\{
\begin{array}{rll}
 \p_t\dd-\Delta \dd &=|\nabla \dd |^2\dd ,&~\text{in}~
 \Omega\times [0,T),\\
\p_\nu \dd&=0,&~\text{on}~\p\Omega\times [0,T).
\end{array}
\right.
  \end{equation}

\begin{prop}\label{smoothing effect}
Let $\dd$ be a classical solution of   \eqref{gradientflow2}   with
\begin{equation}\label{lipschitz bound}
\|(\p_t\dd,\nabla\dd)\|_{L^\infty([0,T]\times\Omega)}\leq M,
\end{equation}
 then  there exists a constant $C=C(M,\Omega)>0$   such that
\begin{equation}\label{improved regularity}
\begin{split}
&\|\dd(\cdot,T_2)-\qq\|_{W^{1,\infty}(\Omega)}\\ &\leq C \(\tfrac 1{T_2-T_1}+1\)\|\dd-\qq\|_{L^\infty([T_1,T_2]\times \Omega)},~\forall~  0\leq T_1< T_2\leq T.
\end{split}
\end{equation}
\end{prop}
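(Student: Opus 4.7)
The plan is to reformulate \eqref{gradientflow2} as a semilinear perturbation of the heat equation for the small quantity $\mathbf{w}\triangleq\dd-\qq$. Since $\qq\in\S^2$ is a constant vector satisfying $\p_\nu\qq=0$, one immediately has
\[
\p_t\mathbf{w}-\Delta\mathbf{w}=|\nabla\mathbf{w}|^2\dd\text{ in }\Omega\times(0,T),\qquad \p_\nu\mathbf{w}=0\text{ on }\p\Omega\times(0,T).
\]
The key structural observation is that the a priori Lipschitz bound \eqref{lipschitz bound} combined with $|\dd|\equiv 1$ yields the quasi-linear estimate
\[
\bigl\||\nabla\mathbf{w}|^2\dd(\cdot,s)\bigr\|_{L^\infty(\Omega)}\leq M\|\nabla\mathbf{w}(\cdot,s)\|_{L^\infty(\Omega)},
\]
so the nonlinear forcing behaves like a bounded coefficient against $\nabla\mathbf{w}$. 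Without loss of generality I would also reduce to $T_2-T_1\leq 1$: if $T_2-T_1>1$ then applying the estimate on the subinterval $[T_2-1,T_2]$ already yields the desired form, since then $1/(T_2-T_1)+1$ is comparable to a constant.

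Next I would apply Duhamel's formula with respect to the Neumann heat semigroup $e^{\tau\Delta_N}$ started at $T_1$,
\[
\mathbf{w}(t)=e^{(t-T_1)\Delta_N}\mathbf{w}(T_1)+\int_{T_1}^{t}e^{(t-s)\Delta_N}\bigl(|\nabla\mathbf{w}|^2\dd\bigr)(s)\,ds,
\]
take the spatial gradient, and invoke the semigroup gradient bound
\[
\|\nabla e^{\tau\Delta_N}f\|_{L^\infty(\Omega)}\leq C\tau^{-1/2}\|f\|_{L^\infty(\Omega)},\qquad \tau\in(0,1],
\]
which follows from differentiating the two-sided Gaussian kernel estimate \eqref{kernel est} in the spatial variable. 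Setting $F(t)\triangleq\|\nabla\mathbf{w}(t)\|_{L^\infty(\Omega)}$ and using the quasi-linear bound above produces the weakly singular Volterra inequality
\[
F(t)\leq C(t-T_1)^{-1/2}\|\mathbf{w}(T_1)\|_{L^\infty(\Omega)}+CM\int_{T_1}^{t}(t-s)^{-1/2}F(s)\,ds,\qquad t\in(T_1,T_2].
\]

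The last step is to close this inequality by the Henry singular Gronwall lemma (applicable since the kernel exponent $1/2$ lies in $[0,1)$ and $F$ is a priori bounded by $M$, so all integrals converge), yielding
\[
F(T_2)\leq C(M)\,(T_2-T_1)^{-1/2}\|\mathbf{w}(T_1)\|_{L^\infty(\Omega)}.
\]
Combining this with the trivial bound $\|\mathbf{w}(T_2)\|_{L^\infty}\leq\|\mathbf{w}\|_{L^\infty([T_1,T_2]\times\Omega)}$, and using $(T_2-T_1)^{-1/2}\leq 1+(T_2-T_1)^{-1}$ together with $\|\mathbf{w}(T_1)\|_{L^\infty}\leq\|\mathbf{w}\|_{L^\infty([T_1,T_2]\times\Omega)}$, one arrives at \eqref{improved regularity}.

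I expect the main technical hurdle to be rigorously justifying the semigroup gradient estimate on the convex $C^2$ domain $\Omega$ with Neumann data: one has to either differentiate the kernel bound \eqref{kernel est} to obtain the pointwise estimate $|\nabla_xK(\tau,x,y)|\leq C\tau^{-(N+1)/2}e^{-|x-y|^2/c\tau}$ using the parabolic regularity of the kernel up to the boundary, or replace the Duhamel step by an $L^p$-maximal regularity cutoff argument based on \eqref{heat est2} and the Sobolev embedding $W^{2,p}\hookrightarrow W^{1,\infty}$ for $p>N$, together with a time cutoff $\chi(t)$ supported in $[T_1,T_2]$ with $\chi\equiv 1$ on $[(T_1+T_2)/2,T_2]$ and $|\chi'|\lesssim(T_2-T_1)^{-1}$. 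Both routes reduce the problem to the same singular Volterra inequality, which is closed by Henry's lemma.
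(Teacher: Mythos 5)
Your proposal is essentially correct but follows a genuinely different route from the paper. You linearize the quadratic nonlinearity by spending one factor of the Lipschitz bound, $|\nabla\mathbf{w}|^2\leq M|\nabla\mathbf{w}|$, and then close a weakly singular Volterra inequality for $F(t)=\|\nabla\mathbf{w}(\cdot,t)\|_{L^\infty(\O)}$ using the $\tau^{-1/2}$ gradient smoothing of the Neumann semigroup and Henry's singular Gronwall lemma. The paper needs neither a Gronwall iteration nor any gradient kernel bound: from the equation and \eqref{lipschitz bound} it first gets $\|\Delta\dd\|_{L^\infty}\leq M+M^2$, hence $\|\dd\|_{L^\infty(0,T;W^{2,p}(\O))}\leq C(M,p)$ by elliptic regularity, and then Gagliardo--Nirenberg interpolation gives $\|\overline{\dd}\|_{L^\infty(0,T;W^{1,2p}(\O))}\leq C\|\overline{\dd}\|_{L^\infty}^{1/2}$ with $\overline{\dd}=\dd-\qq$; squared inside the quadratic nonlinearity, this makes $|\nabla\overline{\dd}|^2(\overline{\dd}+\qq)$ an $L^p$ source controlled \emph{linearly} by $\|\overline{\dd}\|_{L^\infty(Q)}$, so one application of Duhamel, the estimate \eqref{heat est1} for the integral term, and the analytic-semigroup smoothing $\|e^{t\Delta}f\|_{W^{2,4}(\O)}\leq Ct^{-1}\|f\|_{L^4(\O)}$ for the initial term finish the proof. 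Your route yields the slightly better singularity $(T_2-T_1)^{-1/2}$ and avoids the $W^{2,p}$ bound; the paper's route buys independence from gradient heat-kernel estimates, which is precisely where your argument is thinnest.

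That thin spot is the one step you must genuinely repair: the bound $\|\nabla e^{\tau\Delta_N}f\|_{L^\infty(\O)}\leq C\tau^{-1/2}\|f\|_{L^\infty(\O)}$ does not follow by ``differentiating'' the two-sided estimate \eqref{kernel est} --- an inequality on $K$ carries no information on $\nabla_xK$ --- so you need an actual Gaussian upper bound for $\nabla_x K$ for the Neumann Laplacian on a convex $C^2$ domain (true, but it requires a citation or a reflection/parametrix argument), or else your fallback maximal-regularity route. In the latter case note two small inaccuracies: the pointwise-in-time $W^{1,\infty}$ control requires the anisotropic embedding with $p>N+2$ (as used in \eqref{heat est1}), not merely $W^{2,p}\hookrightarrow W^{1,\infty}$ with $p>N$; and the resulting inequality is of $L^p$-in-time Volterra type rather than ``the same'' singular one, though it closes just as well. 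With either repair, your argument is a valid alternative proof of the proposition.
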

\begin{proof}
Without loss of generality, we can just work with the case when   $T_1=0,T_2=T$.
We shall estimate the difference  $\overline{\dd}\triangleq \dd-\qq$, which fulfills
\begin{equation}\label{differenceequ}
\p_t \overline{\dd}-\Delta \overline{\dd}=|\nabla\overline{\dd}|^2 (\overline{\dd}+\qq).
\end{equation}
 
We first deduce from \eqref{lipschitz bound} that
\begin{equation}
\|\Delta \dd\|_{L^\infty(\Omega\times (0,T))}\leq \|\p_t \dd\|_{L^\infty(\Omega\times (0,T))}+\||\nabla\dd|^2\dd\|\|_{L^\infty(\Omega\times (0,T))} \leq M+M^2.
\end{equation}
 This together with elliptic regularity leads to
\begin{equation}\label{lipschitz bound1}
\|\dd\|_{L^\infty(0,T;W^{2,p}(\Omega))}\leq C(M,p),~\forall p>3.
\end{equation}
As a result, we obtain the following estimate of the nonlinear terms:
\begin{equation}
\left\||\nabla \dd|^2\dd\right\|_{L^\infty(0,T;W^{1,p}(\Omega))}\leq C(M,p),~\forall p>3.
\end{equation}
Moreover, we  infer from \eqref{lipschitz bound1} and the  Gagliardo-Nirenberg interpolation inequality that
\begin{equation}\label{improve1}
\|\overline{\dd}\|_{L^\infty(0,T;W^{1,p}(\Omega))}\leq C(M,p)\|\overline{\dd}\|^{1/2}_{L^\infty(\Omega_T)},~\forall p>3.
\end{equation}
To proceed,  we   write \eqref{differenceequ} in terms of the heat semigroup
\begin{equation}\label{semi expansion}
\begin{split}
\overline{\dd}(\cdot,t)=e^{t\Delta} \overline{\dd}(\cdot,0)+ \int_0^t e^{(t-\tau)\Delta}\(|\nabla \overline{\dd}|^2(\overline{\dd}+\qq)\)(\cdot,\tau)\, d\tau.
\end{split}
\end{equation}
The first term on the right hand side above   can be estimated by Sobolev embedding and  semi-group property
\begin{equation}
\|e^{t\Delta} \overline{\dd}(\cdot,0)\|_{W^{1,\infty}(\Omega)}\leq C\|e^{t\Delta} \overline{\dd}(\cdot,0)\|_{W^{2,4}(\Omega)}\leq C t^{-1} \|\overline{\dd}|_{t=0}\|_{L^{4}(\Omega)}.
\end{equation}
Regarding the second term on the right hand side of \eqref{semi expansion}, we employ   \eqref{heat est1} and \eqref{improve1}  to  yield
\begin{equation}
\begin{split}
&\left\|\int_0^t  e^{(t-\tau)\Delta}\(|\nabla \overline{\dd}|^2(\overline{\dd}+\qq)\)(\cdot,\tau)\, d\tau\right\|_{C([0,T];W^{1,\infty}(\Omega))}\\
&\leq  \left\||\nabla \overline{\dd}|^2(\overline{\dd}+\qq)\right\|_{L^p(\Omega\times (0,T))} \leq C(M,p)\|\overline{\dd}\|_{L^\infty(\Omega\times (0,T))}, \end{split}
\end{equation}
with  $p>5$. So we prove \eqref{improved regularity}.
\end{proof}


\section{Proof  of Theorem \ref{mainthm}}
\setcounter{equation}{0}
We first observe that for any $\qq,\mathbf{p}\in\S^2$, there exists $\mathbf{p}_1,\mathbf{p}_2\in\S^2$ such that
\begin{equation}\label{intermedia}
\qq\cdot\mathbf{p}_1>0,\, \mathbf{p}_1\cdot\mathbf{p}_2>0,\, \mathbf{p}_2\cdot \mathbf{p}>0.
\end{equation}
Actually, we can simply choose $\mathbf{p}_1,\mathbf{p}_2$ which trisect the angle $\theta\in [0,\pi]$ expanded by $\qq$ and $\mathbf{p}$.
We shall show that for any initial   state  $\dd_0\in C^{2+\a}(\overline{\O},\S^2)$ satisfying  \eqref{hemi}, there is a control of form \eqref{eq:1.6} such that $\dd(\cdot,\tfrac T4)=\qq$. Once this special case  is done, we can apply it on the interval $[\tfrac T4,\tfrac{T}2]$ to have $\dd(\cdot,\tfrac{T}2)=\mathbf{p}_1$. Then again on $[\tfrac T2,\tfrac{3T}4]$, we can achieve  $\dd(\cdot,\tfrac{3T}4)=\mathbf{p}_2$ and finally  $\dd(\cdot,T)=\mathbf{p}$.   This process is feasible due to   the rotational invariance  of \eqref{gradientflow}.  More precisely, if $(\dd,\HH)$ satisfies \eqref{gradientflow}, so does $(\mathcal{R}\dd,\mathcal{R}\HH)$ for every orthogonal matrix $\mathcal{R}$.

To show the controllability to $\qq$ in $[0,\tfrac T4]$, we denote
$T_0=\frac T{24}$ and
choose the control
\begin{equation}\label{control 1}
\HH(x,t)=\gg(t)\triangleq \lambda(t) \qq,~\forall t\in [0,5T_0],
\end{equation}
where  $\lambda(t)\in C^1([0,  5T_0])$ is non-negative  so  that
\begin{equation}\label{large control}
\lambda(t)=\left\{
\begin{array}{ll}
\Lambda\in\R^+ &~\text{when}~t\in [2T_0, 3T_0],\\
0&~\text{when}~t\in [0,T_0]\cap [4T_0,5T_0].
\end{array}
\right.
\end{equation}
 By choosing the constant $\Lambda$  sufficiently large,   the initial data can  be driven  to a neighborhood of the ground state $\qq$ within   $[0,5T_0]$ such that Proposition \ref{conlinear} can be applied for $t\in [5T_0,6T_0]$.
More precisely, by the assumption \eqref{hemi} and   Proposition \ref{mainwellpose}, there exists  a unique solution $\dd\in C^{2+\a,1+\frac{\a}{2}}(\overline{\O}\times[0, 5T_0])$ to \eqref{gradientflow1} with initial data $\dd_0$ such that
\begin{subequations}
        \begin{align}
        &\sup_{\O\times [0,5T_0]}|\nabla\dd(x,t)|\leq \frac{2}{\eps_0}\sup_{x\in\O}|\nabla\dd_0|,\label{initialdata1}\\
&\mu(x,t)=      \dd(x,t)\cdot \qq\geq  \eps_0> 0,\quad \forall(x,t)\in\O\times [0,5 T_0].\label{equ of mu}
        \end{align}
    \end{subequations}
By our choice of $\lambda(t)$, no control is applied in $[0,T_0]$. So Proposition \ref{mainwellpose} implies
\begin{equation}\label{large control1}
\sup_{\O\times \{T_0\}}|\p_t \dd(x,t)|\leq C(\dd_0).
\end{equation}
Thanks to Proposition \ref{eq:timeest}, $\|\p_t \dd(\cdot, t)\|_{L^\infty(\O)}$ will not increase in $t$ due to the presence of the control \eqref{large control}  with a large $\Lambda$. So the combination of  \eqref{large control1} and \eqref{initialdata1} leads to
\begin{equation}
\sup_{\O\times [T_0, 5T_0]}\(|\nabla \dd(x,t)|+|\p_t \dd(x,t)|\)\leq C(\dd_0,\epsilon_0,\O),
\end{equation}
where  $C(\cdot )$  is independent of $\Lambda$. Since no control is applied in $[4T_0,5T_0]$, we employ Proposition \ref{smoothing effect} and deduce
\begin{equation}\label{improved regularity1}
\|\dd(\cdot,5T_0)-\qq\|_{W^{1,\infty}(\Omega)}\leq C(\dd_0,\epsilon_0,\O) T_0^{-1}\|\dd-\qq\|_{L^\infty([4T_0,5T_0]\times \Omega)}.
\end{equation}
On the other hand,  it follows from  \eqref{equ of mu} and \eqref{large control} that $\mu$ satisfies
  \begin{equation}
    \p_t \mu-\Delta \mu=|\nabla \dd|^2 \mu+ \Lambda^2(\mu-\mu^3),~\text{for}~t\in [2T_0,3T_0].
  \end{equation}
If we denote $\phi=1-\mu$, since $\mu\in [\epsilon_0,1]$ with  $\epsilon_0\in (0,1)$, we have
    \begin{equation}\label{compare1}
    \p_t \phi-\Delta \phi\leq   -\Lambda^2(1-\phi)\phi(2-\phi)\leq -\Lambda^2\epsilon_0\phi,~\text{for}~t\in [2T_0,3T_0].
  \end{equation}
Applying the  comparison  principle yields  the decay
  \begin{equation*}
 1-\dd(x,t)\cdot\qq= \phi(x,t)\leq e^{-\Lambda^2 \epsilon_0 (t-2T_0)},~\forall t\in [2T_0,3T_0].
  \end{equation*}
For any  $T_0>0$ and   $\Lambda>0$, we set
\begin{equation}\label{lambda choice}
\epsilon_4=e^{-\Lambda^2\epsilon_0 T_0}.
\end{equation}
Then there holds
  \begin{equation}\label{push1}
    0\leq 1-\dd(x,3T_0)\cdot\qq \leq \epsilon_4,~\forall x\in\O.
  \end{equation}
For  $t\in [3T_0,5T_0]$, instead of having \eqref{compare1}, we have $\p_t\phi-\Delta \phi\leq 0$, and thus the maximum principle implies
  \begin{equation}
 0\leq  1-\dd(x,t)\cdot\qq \leq \sup_{\O}\(1-\dd(x,3T_0)\cdot\qq\) \leq \epsilon_4,~\forall t\in [3T_0,5T_0].
  \end{equation}
  This combined with \eqref{improved regularity1} leads to
  \begin{equation}\label{improved regularity2}
\|\dd(\cdot,5T_0)-\qq\|_{W^{1,\infty}(\Omega)}\leq C(\dd_0,\epsilon_0,\O) T_0^{-1}\epsilon_4.
\end{equation}
So for any  $T_0\in (0,1)$,  by choosing a  sufficiently large  $
\Lambda$ in \eqref{lambda choice}, we shall have $\epsilon_4$ being sufficiently small so that    $\vv_0(x)\triangleq \mathbf{\Psi}^{-1}(\dd(x,5T_0))$ will satisfy   \eqref{initialdata} where $\mathbf{\Psi}$ is  the stereographic projection  defined by \eqref{stereograph}. Then we consider the control system \eqref{noncontrolstere} on $[5T_0,6T_0]$.     This is the second stage of the control process, where  we can apply Proposition \ref{conlinear}
 to obtain a control $\ff\in L^\infty(\O\times (5T_0,6T_0)$ such that $\vv(\cdot,6T_0)\equiv 0$.
 According to Proposition \ref{equiv3} and Lemma \ref{implicitfunc}, we have $\dd=\mathbf{\Psi}(\vv), \HH=\HH(\ff,\vv)$ satisfying \eqref{gradientflow} on $\O\times (5T_0,6T_0)$, and $\dd(\cdot,6T_0)=\qq$. This completes the proof    of Theorem \ref{mainthm}.

\section*{Acknowledgments}
The author would like to thank Professor Xu Zhang for helpful discussions.  

\begin{thebibliography}{10}

\bibitem{MR3621817}
C.~Cavaterra, E.~Rocca, and H.~Wu.
\newblock Optimal boundary control of a simplified {E}ricksen-{L}eslie system
  for nematic liquid crystal flows in 2{D}.
\newblock {\em Arch. Ration. Mech. Anal.}, 224(3):1037--1086, 2017.

\bibitem{chen1999maximum}
Q.~Chen.
\newblock Maximum principles, uniqueness and existence for harmonic maps with
  potential and {L}andau-{L}ifshitz equations.
\newblock {\em Calculus of Variations and Partial Differential Equations},
  8(2):91--107, 1999.

\bibitem{MR1103113}
E.~B. Davies.
\newblock {\em Heat kernels and spectral theory}, volume~92 of {\em Cambridge
  Tracts in Mathematics}.
\newblock Cambridge University Press, Cambridge, 1990.

\bibitem{MR2318575}
R.~Denk, M.~Hieber, and J.~Pr\"{u}ss.
\newblock Optimal {$L^p$}-{$L^q$}-estimates for parabolic boundary value
  problems with inhomogeneous data.
\newblock {\em Math. Z.}, 257(1):193--224, 2007.

\bibitem{doubova2002controllability}
A.~Doubova, E.~Fern{\'a}ndez-Cara, M.~Gonz{\'a}lez-Burgos, and E.~Zuazua.
\newblock On the controllability of parabolic systems with a nonlinear term
  involving the state and the gradient.
\newblock {\em SIAM Journal on Control and Optimization}, 41(3):798--819, 2002.

\bibitem{fernandez2006exact}
E.~Fern{\'a}ndez-Cara, M.~Gonz{\'a}lez-Burgos, S.~Guerrero, and J.-P. Puel.
\newblock Exact controllability to the trajectories of the heat equation with
  {F}ourier boundary conditions: the semilinear case.
\newblock {\em ESAIM: Control, Optimisation and Calculus of Variations},
  12(3):466--483, 2006.

\bibitem{MR2257228}
E.~Fern\'{a}ndez-Cara and S.~Guerrero.
\newblock Global {C}arleman inequalities for parabolic systems and applications
  to controllability.
\newblock {\em SIAM J. Control Optim.}, 45(4):1399--1446, 2006.

\bibitem{MR1987179}
A.~Granas and J.~Dugundji.
\newblock {\em Fixed point theory}.
\newblock Springer Monographs in Mathematics. Springer-Verlag, New York, 2003.

\bibitem{MR0482822}
R.~S. Hamilton.
\newblock {\em Harmonic maps of manifolds with boundary}.
\newblock Lecture Notes in Mathematics, Vol. 471. Springer-Verlag, Berlin-New
  York, 1975.

\bibitem{HardtKinderlehrerLin1986}
R.~Hardt, D.~Kinderlehrer, and F.-H. Lin.
\newblock Existence and partial regularity of static liquid crystal
  configurations.
\newblock {\em Comm. Math. Phys.}, 105(4):547--570, 1986.

\bibitem{jager1979uniqueness}
W.~J{\"a}ger and H.~Kaul.
\newblock Uniqueness and stability of harmonic maps and their {J}acobi fields.
\newblock {\em manuscripta mathematica}, 28(1-3):269--291, 1979.

\bibitem{MR834612}
P.~Li and S.-T. Yau.
\newblock On the parabolic kernel of the {S}chr\"{o}dinger operator.
\newblock {\em Acta Math.}, 156(3-4):153--201, 1986.

\bibitem{Lin2010}
F.~Lin, J.~Lin, and C.~Wang.
\newblock Liquid crystal flows in two dimensions.
\newblock {\em Arch. Ration. Mech. Anal.}, 197(1):297--336, 2010.

\bibitem{lin2007magnetic}
F.~Lin and X.-B. Pan.
\newblock Magnetic field-induced instabilities in liquid crystals.
\newblock {\em SIAM Journal on Mathematical Analysis}, 38(5):1588--1612, 2007.

\bibitem{LinWang2008}
F.~Lin and C.~Wang.
\newblock {\em The analysis of harmonic maps and their heat flows}.
\newblock World Scientific Publishing Co. Pte. Ltd., Hackensack, NJ, 2008.

\bibitem{MR3518239}
F.~Lin and C.~Wang.
\newblock Global existence of weak solutions of the nematic liquid crystal flow
  in dimension three.
\newblock {\em Comm. Pure Appl. Math.}, 69(8):1532--1571, 2016.

\bibitem{MR1157815}
W.~Rudin.
\newblock {\em Functional analysis}.
\newblock International Series in Pure and Applied Mathematics. McGraw-Hill,
  Inc., New York, second edition, 1991.

\bibitem{simoncompact}
J.~Simon.
\newblock {Compact sets in the space {$L^p(0,T;B)$}}.
\newblock {\em Ann. Mat. Pura Appl. (4)}, 146:65--96, 1987.

\bibitem{MR2744149}
M.~E. Taylor.
\newblock {\em Partial differential equations {III}. {N}onlinear equations},
  volume 117 of {\em Applied Mathematical Sciences}.
\newblock Springer, New York, second edition, 2011.

\end{thebibliography}

\end{document}